\numberwithin{equation}{section}
\theoremstyle{plain}
\newtheorem{theorem}[equation]{Theorem}
\newtheorem{conjecture}[equation]{Conjecture}
\newtheorem{proposition}[equation]{Proposition}
\newtheorem{corollary}[equation]{Corollary}
\newtheorem{lemma}[equation]{Lemma}
\theoremstyle{definition}
\newtheorem{fact}[equation]{Fact}
\newcommand{\RR}{\mathbb{R}}
\newcommand{\ZZ}{\mathbb{Z}}
\newcommand{\NN}{\mathbb{N}}
\newcommand{\QQ}{\mathbb{Q}}
\newcommand{\XX}{\mathbb{X}}
\newcommand{\calB}{\mathcal{B}}
\newcommand{\calC}{\mathcal{C}}
\newcommand{\calM}{\mathcal{M}}
\newcommand{\calS}{\mathcal{S}}
\newcommand{\TOS}{T_*^{\rm os}}
\newcommand{\TC}{T_*^{\rm c}}
\newcommand{\TNC}{T_*^{\rm u}}
\newcommand{\COS}{\calC^{\rm os}}
\newcommand{\CCC}{\calC^{\rm c}}
\newcommand{\CNC}{\calC^{\rm u}}
\newcommand{\EOS}{E^{\rm os}_\lambda}
\newcommand{\EC}{E^{\rm c}_\lambda}
\newcommand{\ENC}{E^{\rm u}_\lambda}
\newcommand{\ind}[1]{{\mathds{1}_{{#1}}}}
\title[Sharp constants in Calder\'on transference]
{Sharp constants in inequalities admitting \\ the Calder\'on transference principle}
\author{Dariusz Kosz}
\address{\scriptsize Dariusz Kosz (\textnormal{dkosz@bcamath.org}) \newline
	Basque Center for Applied Mathematics, 48009 Bilbao, Spain 
\newline
	Wroc{\l}aw University of Science and Technology, 50-370 Wroc{\l}aw, Poland
}
\begin{document}

\begin{abstract}
The aim of this note is twofold. Firstly, we prove an abstract version of the Calder\'on transference principle for inequalities of admissible type in the general commutative multilinear and multiparameter setting. Such an operation does not increase the constants in the transferred inequalities. Secondly, we use the last information to study a certain dichotomy arising in problems of finding the best constants in the weak type $(1,1)$ and strong type $(p,p)$ inequalities for one-parameter ergodic maximal operators.

\medskip	
\noindent \textbf{2020 Mathematics Subject Classification:} Primary 37A30, 42B25.

\smallskip
\noindent \textbf{Key words:} Calder\'on transference, maximal operator, sharp constant.
\end{abstract}

\maketitle

\section{Introduction}\label{S1}

\subsection{Historical background} The Calder\'on transference principle \cite{Ca} is a powerful tool in ergodic theory which allows one to transfer various quantitative results, such as inequalities of strong or weak type, from one specific dynamical system -- called \emph{canonical} -- to all systems of the same type. Somewhat paradoxically, its main application is related to a~purely qualitative property. Indeed, this kind of transference is widely used in verifying almost everywhere convergence of ergodic averages as the time parameter goes to infinity. 

The classical approach to the latter subject relies on the following two-step procedure:
\begin{itemize}
	\item finding a dense class of functions for which pointwise convergence holds,
	\item proving an appropriate maximal inequality for the related averaging operators, which implies that the set of functions enjoying pointwise convergence is closed. 
\end{itemize}
What is transferred between systems is this maximal inequality, with the most remarkable example being the Hardy--Littlewood maximal inequality for the one-sided averaging operators on $\ZZ$, which -- when combined with the mean ergodic theorem of von Neumann \cite{Neu} -- can be used to deduce the celebrated pointwise ergodic theorem of Birkhoff \cite{Bir}. 

In the modern approach, on the other hand, maximal estimates are replaced by some stronger ones -- where variation of a sequence of numbers rather than the maximum is controlled -- thanks to which pointwise convergence can be established for all functions directly. This idea was crucial in a breakthrough series of articles by Bourgain \cite{B1, B2, B3} who obtained the pointwise ergodic theorem for operators averaging along orbits with times determined by polynomials. Very recently, Krause, Mirek, and Tao \cite{KMT} were able to show a similar result for certain bilinear operators, also using the ideas of \cite{Ca} to reduce the problem to studying canonical systems. These results are major steps towards confirming the general conjecture about pointwise convergence of multilinear averages taken along polynomial orbits, see Conjecture~\ref{limAf}, promoted by Furstenberg and stated by Bergelson and Leibman in \cite{BL}.

In this note we shall show that the Calder\'on transference principle is valid for a~broad spectrum of scenarios regarding both, the types of underlying dynamical systems and the types of estimates to be transferred, see Theorem~\ref{CT}. In particular, all the settings considered in Conjecture~\ref{limAf} are captured. As we shall prove, the whole process can be carried out without increasing the optimal constants in the studied inequalities, see \eqref{sharpCT}.

In view of \eqref{sharpCT}, the following dichotomy arises naturally. For a given dynamical system the best constant in the studied inequality is either equal or strictly smaller than the best constant in the same inequality for the associated canonical system. To illustrate the importance of this observation, we shall use \eqref{sharpCT} to classify for which systems equality or strict inequality regarding the two constants occurs, in the case of weak type $(1,1)$ and strong type $(p,p)$ maximal inequalities for centered, uncentered or one-sided averages taken along linear orbits determined by a single ergodic transformation, see Theorem~\ref{T1}.      

\subsection{Calder\'on transference}

Let $\XX$ be a \emph{measure-preserving dynamical system}, that is, a quadruple $(X, \calB, \mu, \mathcal T)$, where $(X, \calB, \mu)$ is a nontrivial\footnote{Here by ``nontrivial'' we mean that there exists a subset of $X$ with finite and strictly positive measure.} $\sigma$-finite measure space and $\mathcal T = (T_1, \dots, T_d)$, $d \in \NN$, is a family of \emph{measure-preserving transformations} on $X$. The latter means that $T_i \colon X \to X$ are measurable and $\mu(T_i^{-1}(E)) = \mu(E)$ for all $E \in \calB$. Throughout the paper we assume that $T_i$ are invertible and commute with each other.

Next, for $m, k \in \NN$, let $\mathcal P = (P_{1,1}, \dots, P_{d,m} )$ be a family of $k$-variate polynomials $P_{i,j}$ such that $P_{i,j}(\ZZ^k) \subseteq \ZZ^k$. The associated $m$-linear averaging operators are defined by
\begin{equation} \label{Af}
A^{\mathcal P}_N f(x) \coloneqq
\mathbb E_{n \in [N]^k} \prod_{j \in [m]} f_j ( T_1^{P_{1,j}(n)} \cdots T_d^{P_{d,j}(n)} x ), \qquad x \in X.
\end{equation}
Here $f = (f_1, \dots, f_m)$ is an $m$-tuple of $\mu$-measurable functions $f_j \colon X \to \RR$, by $[l]$ we mean $\{1, \dots, l\}$, and $\mathbb E_{y \in Y} g(y)$ is the expected value of $g$, where the expectation is taken with respect to the discrete uniform distribution over the indicated finite set $Y$.

Regarding the operators \eqref{Af}, the following conjecture was posed in \cite{BL}.

\begin{conjecture} \label{limAf}
Fix $d,m,k \in \NN$ and let $\mathbb X, \mathcal P$ be as before. Then, for the operators \eqref{Af},  
\begin{equation*} 
\lim_{N\to \infty} A^{\mathcal P}_N f(x)
\end{equation*}
exists $\mu$-almost everywhere for each $m$-tuple $f = (f_1, \dots, f_m)$ with $f_j \in L^\infty(\mathbb X)$, $j \in [m]$. 
\end{conjecture}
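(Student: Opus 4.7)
The plan is to follow the modern two-step scheme made quantitative through oscillation/variation seminorms, using the Calderón transference supplied by Theorem~\ref{CT} to reduce to a canonical model. The starting move is to apply Theorem~\ref{CT} in order to pass from the abstract system $\mathbb X$ to the canonical shift system on $\ZZ^d$. The problem thereby becomes: for the $m$-linear averages $A_N^{\mathcal P}$ acting on tuples of $\ell^\infty(\ZZ^d)$ functions, establish both (i) pointwise convergence on a dense subclass, and (ii) a quantitative oscillation (or $r$-variation) inequality that propagates the a.e.\ convergence from the dense class to all $L^\infty$ tuples.

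For (ii) I would attack the multiplier of $A_N^{\mathcal P}$ on $\ZZ^d$ via the Hardy--Littlewood circle method. The minor-arc contribution should be controlled by Weyl-type bounds for the polynomial exponential sums generated by $\mathcal P$, while the major arcs are compared to continuous averages, by lifting to $\RR^d$ and a sampling argument, reducing matters to a continuous multilinear polynomial ergodic oscillation estimate. Assembling the two pieces in the style of Bourgain \cite{B1,B2,B3} in the single-function case and Krause--Mirek--Tao \cite{KMT} in the bilinear case should, in principle, yield the required oscillation inequality; the sharpness provided by \eqref{sharpCT} guarantees that the quantitative constants obtained on the canonical model descend without loss to $\mathbb X$.

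For (i), I would exploit the joint spectral resolution of the shifts $S_1,\dots,S_d$ together with the theory of Host--Kra--Ziegler characteristic factors tailored to the polynomial family $\mathcal P$. On the structured component (eigenfunctions or nilsequences of the appropriate step) one can compute $\lim_N A_N^{\mathcal P} f$ essentially by hand, via equidistribution on nilmanifolds; on the orthogonal complement the oscillation bound from (ii) should force the limit to vanish, giving convergence on a class dense in the relevant $L^p$ topology.

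The principal obstacle is step (ii) in full generality: the multilinear oscillation inequalities on $\ZZ^d$ demand number-theoretic input (higher-order Gowers uniformity, nilsequence asymptotics, and sharp bounds for multilinear exponential sums in many variables) that is currently available only under strong restrictions on $d$, $m$, and $\deg\mathcal P$. Indeed the general Bergelson--Leibman conjecture remains open, so the outline above should be understood as a programmatic blueprint which aligns the known partial results with the uniform structural reduction provided by Theorem~\ref{CT}, rather than as a complete argument.
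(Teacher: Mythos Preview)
The statement you are addressing is labelled \emph{Conjecture}~\ref{limAf} in the paper, and the paper does \emph{not} prove it. Immediately after stating it the author writes that ``the conjecture was verified in some specific cases'' and that ``it is very likely that the general case, if true, will be proved in a similar fashion''; the role of this statement in the paper is purely motivational, framing why a general Calder\'on transference principle (Theorem~\ref{CT}) is worth recording. There is therefore no paper-side proof to compare your proposal against.

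Your write-up is an honest programmatic sketch, and you yourself flag the decisive gap: step~(ii), the multilinear oscillation/variation inequality on $\ZZ^d$ for arbitrary $d,m,k$ and arbitrary polynomial families $\mathcal P$, is precisely the open heart of the Bergelson--Leibman conjecture. The Calder\'on reduction via Theorem~\ref{CT} is genuine and correctly invoked, but it only moves the difficulty to the canonical system; it does not supply the missing harmonic-analytic and number-theoretic input. One further point worth noting: Theorem~\ref{CT} transfers \emph{inequalities} of the form \eqref{INQ}, not the qualitative statement ``$\lim_N A_N^{\mathcal P} f(x)$ exists a.e.'' directly, so even the initial reduction step needs to go through a quantitative surrogate (variation or oscillation bounds) rather than pointwise convergence itself---you implicitly assume this, but it should be made explicit. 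In short, your outline is consistent with how experts expect the problem to be attacked, but it is not a proof, and neither the paper nor the current literature provides one in the stated generality.
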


As mentioned before, the conjecture was verified in some specific cases and the proofs usually required some quantitative knowledge about the behavior of $(A^{\mathcal P}_N f(x))_{N \in \NN}$. It is very likely that the general case, if true, will be proved in a similar fashion.

With this in mind we introduce an abstract map $\mathcal O \colon \RR^{\NN} \to [0, \infty]$ whose arguments are sequences of real numbers. The only assumption imposed on $\mathcal O$ will be that it is approximated from below by a sequence of measurable mappings $(\mathcal O_K)_{K \in \NN}$ depending on finitely many coordinates. Precisely, for each $K \in \NN$ let $\mathcal O_K \colon \RR^K \to [0, \infty)$ be measurable with respect to the standard topology on $\RR^K$. We assume that $\mathcal O_K$ can be chosen so that
\begin{equation} \label{O1}
\mathcal O (a_n : n \in \NN) \geq \mathcal O_K (a_n : n \in [K]), \qquad K \in \NN, \, (a_n : n \in \NN) \in \RR^{\NN}
\end{equation}
and
\begin{equation} \label{O2}
\mathcal O (a_n : n \in \NN) = \lim_{K \to \infty} \mathcal O_K (a_n : n \in [K]), \qquad (a_n : n \in \NN) \in \RR^{\NN}.
\end{equation} 

Given $p = (p_0,p_1, \dots, p_m) \in (0, \infty)^{m+1}$ with $\frac{1}{p_1} + \cdots + \frac{1}{p_m} = \frac{1}{p_0}$, we examine the inequalities
\begin{equation} \label{INQ}
\| \mathcal O (A_N^{\mathcal P}f : N \in \NN) \|_{p_0}
\leq C \prod_{j \in [m]} \| f_j \|_{p_j}
\quad \text{and} \quad 
\| \mathcal O (A_N^{\mathcal P}f : N \in \NN) \|_{p_0, \infty}
\leq C \prod_{j \in [m]} \| f_j \|_{p_j},
\end{equation} 
where by $\mathcal O(g_n : n \in \NN)$ we mean the function $x \mapsto \mathcal O(g_n(x) : n \in \NN)$, while $\|g\|_{q}$ and $\|g\|_{q,\infty}$ stand for the usual Lebesgue and weak Lebesgue $q$-quasinorms of $g$ with respect to $\mu$.
More precisely, given $g \colon X \to \RR$ and $q \in (0,\infty)$, we have
\[
\|g\|_{q} \coloneqq \Big (  \int_X |g(x)|^q \, {\rm d}\mu(x) \Big )^{1/q}
\quad \text{and} \quad
\|g\|_{q, \infty} \coloneqq \sup_{\lambda \in (0, \infty)} \lambda \mu( \{ x \in X : |g(x)| \geq \lambda \})^{1/q}.
\]  
We write respectively $\mathcal C_{\mathcal O}^{\mathcal P}(\XX, p, {\rm s})$ and $\mathcal C_{\mathcal O}^{\mathcal P}(\XX, p, {\rm w})$ for the smallest constants $C \in [0,\infty]$ such that these inequalities hold true for all $m$-tuples $f \in L^{p_1}(\mathbb X) \times \cdots \times L^{p_m}(\mathbb X)$.

While reading, one can think of a model case, where $\mathcal O$ is the supremum norm 
\[
\mathcal M (a_n : n \in \NN) \coloneqq \sup_{n \in \NN} |a_n|.
\]
Notice that $\mathcal M$ is approximated from below by $\mathcal M_K(a_n : n \in [K]) \coloneqq \max_{n \in [K]} |a_n|$. Another important example is the $r$-variation seminorm, $r \in [1, \infty)$, given by 
\[
\mathcal V^r (a_n : n \in \NN) \coloneqq \sup_{J \in \NN} \sup_{n_0 < n_1 < \cdots < n_{J}} \Big( \sum_{j \in [J]} |a_{n_j} - a_{n_{j-1}}|^r \Big)^{1/r}
\]
approximated by its truncated versions $\mathcal V^r_K$, where we additionally demand $n_J \leq K$. 

Let us observe that $\mathcal M$ and $\mathcal V^r$ control respectively the size and variability of the input values. This is why estimates for $\mathcal V^r$ can be used to prove pointwise convergence directly, while maximal inequalities are only effective in estimating error terms, therefore requiring a dense class of functions for which pointwise convergence is known {\it a priori}.

In practice, estimates for $\mathcal V^r$ are more difficult to prove than their counterparts for $\mathcal M$. Moreover, quite often they hold only in a limited range. For example, sequences of expected value operators coming from martingales enjoy variation estimates for $r \in (2,\infty)$ but not for $r \in [1,2]$, see \cite{Lep, PX}. For more details and other important examples related to oscillation or jump inequalities we refer the reader to \cite{JSW, MSS, Slo}. 

Regarding the underlying system $\XX$, our reference point is the $d$-dimensional canonical system $\XX_d = (\ZZ^d, 2^{\ZZ^d}, \#_d, \mathcal T_d)$, that is, $\ZZ^d$ equipped with the $\sigma$-algebra of all subsets, counting measure, and the family $\mathcal T_d = (\mathcal T_{d,1}, \dots, \mathcal T_{d,d})$ of $d$ independent shifts defined by 
\[
\mathcal T_{d,i}(l_1, \dots, l_{i-1}, l_i, l_{i+1}, \dots l_d) \coloneqq
(l_1, \dots, l_{i-1}, l_i+1, l_{i+1}, \dots l_d).
\]

We shall show the following result relating the actions of $\mathcal O$ on $\XX$ and $\XX_d$ to each other.
\begin{theorem}[Calder\'on transference] \label{CT}
	Fix $d,m,k \in \NN$ and let $\mathbb X, \mathcal P, p$ be as before. Then
	\begin{equation} \label{sharpCT}
	\mathcal C_{\mathcal O}^{\mathcal P}(\XX, p, {\rm s}) \leq \mathcal C_{\mathcal O}^{\mathcal P}(\XX_d, p, {\rm s})
	\quad \text{and} \quad
	\mathcal C_{\mathcal O}^{\mathcal P}(\XX, p, {\rm w}) \leq \mathcal C_{\mathcal O}^{\mathcal P}(\XX_d, p, {\rm w}) 
	\end{equation}
	hold for all $\mathcal O$ admitting \eqref{O1} and \eqref{O2}, where $\XX_d$ is the $d$-dimensional canonical system. 
\end{theorem}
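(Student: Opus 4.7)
\medskip\noindent\textbf{Plan of proof.} The strategy is a multilinear multiparameter version of the classical Calder\'on transference argument, carried out first for each finite approximant $\mathcal{O}_K$ and then upgraded to $\mathcal{O}$ via \eqref{O1}--\eqref{O2}. Since the transformations in $\mathcal{T}$ commute and are invertible, for every $l = (l_1, \dots, l_d) \in \ZZ^d$ the composition $T^l \coloneqq T_1^{l_1} \cdots T_d^{l_d}$ is a well-defined measure-preserving bijection of $X$. Given $x \in X$ and $f = (f_1, \dots, f_m)$, define the lifts $F_{x,j} \colon \ZZ^d \to \RR$ by $F_{x,j}(l) \coloneqq f_j(T^l x)$. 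A direct computation from \eqref{Af} yields the transference identity $A_N^{\mathcal{P}} f(T^l x) = A_N^{\mathcal{P}} F_x(l)$, where the right-hand side is interpreted via the canonical shifts $\mathcal{T}_d$ on $\XX_d$.

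Fix $K \in \NN$ and let $L = L(K, \mathcal{P})$ be a common upper bound for $|P_{i,j}(n)|$ over $i \in [d]$, $j \in [m]$, $n \in [K]^k$. For each $L'' \in \NN$, put $\Lambda \coloneqq \{-L'', \dots, L''\}^d$ and $\Lambda' \coloneqq \{-L''-L, \dots, L''+L\}^d$, and truncate the lifts as $\tilde{F}_{x,j} \coloneqq F_{x,j} \mathds{1}_{\Lambda'}$. Then, for all $l \in \Lambda$ and all $N \leq K$, the averaging window used in $A_N^{\mathcal{P}} F_x(l)$ is contained in $\Lambda'$, so $A_N^{\mathcal{P}} \tilde{F}_x(l) = A_N^{\mathcal{P}} f(T^l x)$. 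Moreover, \eqref{O1} gives $\mathcal{O}_K \leq \mathcal{O}$ pointwise, so the inequalities \eqref{INQ} on $\XX_d$ persist with $\mathcal{O}_K$ in place of $\mathcal{O}$ at the same constants.

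For the strong-type claim, apply the canonical inequality to $\tilde{F}_x$, raise to the $p_0$-th power and integrate in $x$. By Fubini and measure preservation the left-hand side dominates $|\Lambda| \cdot \|\mathcal{O}_K(A_N^{\mathcal{P}} f : N \in [K])\|_{p_0}^{p_0}$; a H\"older inequality in $x$ with exponents $p_j/p_0$ (whose reciprocals sum to $1$) bounds the right-hand side by $[\mathcal{C}_{\mathcal{O}}^{\mathcal{P}}(\XX_d, p, {\rm s})]^{p_0} \cdot |\Lambda'| \prod_{j \in [m]} \|f_j\|_{p_j}^{p_0}$. Dividing by $|\Lambda|$ and sending $L'' \to \infty$ drives the boundary ratio $|\Lambda'|/|\Lambda|$ to $1$; Fatou's lemma together with \eqref{O2} then upgrades the bound from $\mathcal{O}_K$ to $\mathcal{O}$. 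The weak-type assertion is obtained in parallel by starting from the measure-preservation identity $\mu\bigl\{x : \mathcal{O}_K(A_N^{\mathcal{P}} f(x) : N \in [K]) > \lambda\bigr\} = |\Lambda|^{-1} \int_X \#\bigl\{l \in \Lambda : \mathcal{O}_K(A_N^{\mathcal{P}} \tilde{F}_x(l) : N \in [K]) > \lambda\bigr\} \, d\mu(x)$ and estimating the inner cardinality by the canonical weak-type constant; the subsequent H\"older step, the cube limit, and the passage $K \to \infty$ (justified by the lower semicontinuity of $\|\cdot\|_{p_0, \infty}$ under pointwise convergence) proceed exactly as before.

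The main obstacle is purely bookkeeping: arranging the multilinear H\"older step and the expanding-cube limit so that the only surviving factor is precisely $\mathcal{C}_{\mathcal{O}}^{\mathcal{P}}(\XX_d, p, \cdot)$, with no hidden loss that would weaken \eqref{sharpCT} into a non-sharp inequality.
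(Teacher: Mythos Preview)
Your proposal is correct and follows essentially the same Calder\'on transference argument as the paper: lift $f$ to truncated functions on $\ZZ^d$ indexed by $x$, apply the canonical inequality pointwise in $x$, integrate, and exploit the H\"older relation $\sum_j 1/p_j = 1/p_0$ together with $|\Lambda'|/|\Lambda| \to 1$ to pass to the limit without loss. The only cosmetic differences are that you decouple the cube-size parameter $L''$ from the truncation level $K$ (the paper ties them together via a single $K$ and an auxiliary radius $R_K$) and that you make the multilinear H\"older step explicit, which the paper leaves implicit.
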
 

\noindent In particular, regarding maximal, variation, oscillation or jump inequalities it is enough to deal with canonical systems.  

\subsection{Dichotomy} Theorem~\ref{CT} motivates the following question:
\[
\textit{When does the equality take place in \eqref{sharpCT}?} 
\]
We shall show how to deal with such a kind of problem, by solving it in a very particular relatively easy case. In this subsection we take $d = m = k = 1$ and $P(n) = n$. Moreover, we assume that   
$T$ is {\it ergodic}, which means that $T^{-1}(E) = E$ implies $\mu(E) =0$ or $\mu(X \setminus E)=0$. 

In this context we introduce the one-sided maximal operator $\TOS$ by using the formula
\[
\TOS f(x) \coloneqq \sup_{N \in \NN \, \cup \{0\}} \Big| \frac{1}{N+1} \sum_{n=0}^{N} f(T^n x) \Big|, \qquad x \in X. 
\] 
Similarly, we define the two-sided maximal operators, centered $\TC$ and uncentered $\TNC$, by
\[
\TC f(x) \coloneqq \sup_{N \in \NN \, \cup \{0\} } \Big|\frac{1}{2N+1} \sum_{n=-N}^{N} f(T^n x)\Big| \quad \text{and} \quad \TNC f(x) \coloneqq \sup_{\underline r, \overline r \in \NN \cup \{0\} } \Big|\frac{1}{\underline r + \overline r+1} \sum_{n=-\underline r}^{\overline r} f(T^n x)\Big|.
\]

Regarding these operators, it is natural to ask about the weak type $(1,1)$ and strong type $(p,p)$ inequalities for $p \in (1, \infty]$. 
Thus, we put $\COS(\XX, 1) \coloneqq \mathcal C_{\TOS}^P(\mathbb X, (1,1), {\rm w})$ and similarly $\COS(\XX, p) \coloneqq \mathcal C_{\TOS}^P(\mathbb X, (p,p), {\rm s})$ for each $p \in (1, \infty)$, while $\COS(\XX, \infty)$ will stand for the best constant $C$ in the inequality $ \| \TOS f \|_\infty \leq C \| f \|_\infty$, where 
\[\| g \|_\infty \coloneqq \inf \{ \lambda \in [0,\infty] : |g(x)| \leq \lambda \text{ for } \mu\text{-almost every } x \in X \}.
\]   
We define $\CCC(\XX, p)$ and $\CNC(\XX, p)$, $p \in [1,\infty]$, in the same manner, replacing $\TOS$ with $\TC$ and $\TNC$. In view of Theorem~\ref{CT}, properties of the Hardy--Littlewood maximal operators on $\ZZ$, and obvious bounds for $p = \infty$, all these quantities are finite regardless of $\XX$.

Given a nontrivial ergodic system $\XX$, we say that $X$ consists of finitely many atoms if it splits into disjoint measurable sets $X_0, X_1, \dots, X_L$, $L \in \NN$, such that $\mu(X_0) = 0$, $T(X_l) \subseteq X_{l+1}$ for $l \in [L-1]$, $T(X_L) \subseteq X_1$, and none of $X_l$, $l \in [L]$, can be split further into two disjoint sets of nonzero measure. Observe that then $\mu(X_1) = \cdots = \mu(X_L) \in (0,\infty)$. 
   
The following well-known fact will be useful later on.
\begin{fact} [Kakutani--Rokhlin lemma] \label{F1}
	Let $\XX$ be a nontrivial ergodic system. Then exactly one of the following two possibilities holds:
	\begin{enumerate}[label=(\Alph*)]
		\item \label{F1A} the set $X$ consists of finitely many atoms, 
		\item \label{F1B} for each $L \in \NN$ there exists $E_L \in \calB$ such that $\mu(E_L) \in (0, \infty)$ and the sets $T^{-l}(E_L)$, $l \in [L]$, are disjoint. 
	\end{enumerate}
\end{fact}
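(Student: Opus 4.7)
The plan is to verify first that (A) and (B) are mutually exclusive, and then that the failure of (A) forces (B).

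Mutual exclusivity is immediate: if (A) holds with $L$ atoms, then $\mu(X) = L\mu(X_1) < \infty$, and every measurable $E$ with $\mu(E) > 0$ must contain some $X_l$ modulo null by the atomic property, hence $\mu(E) \geq \mu(X)/L$. A disjoint tower of $L'$ translates of $E$ then requires $L' \mu(E) \leq \mu(X)$, forcing $L' \leq L$; in particular (B) fails for $L' = L+1$.

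Now assume (A) fails. I would first verify aperiodicity: $\mu(\{x : T^n x = x\}) = 0$ for every $n \in \NN$. This set is $T$-invariant, so by ergodicity it is null or co-null; if co-null for some minimal $n = L_0$, a standard orbit-section argument reconstructs $X$ as $L_0$ atoms cyclically permuted by $T$, contradicting the failure of (A).

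Using aperiodicity, I construct $E_L$ in two cases. In the dissipative case (possible only if $\mu(X) = \infty$, since finite-measure ergodic systems are conservative by Poincar\'e recurrence), there is a wandering set $W$ of positive finite measure, and the translates $T^{-l}W$, $l \in \ZZ$, are pairwise disjoint, so $E_L := W$ works. In the conservative case, select a set $F$ of positive finite measure and form the backward return-time partition $F^{(k)} := \{x \in F : T^{-j}x \notin F \text{ for } 1 \leq j < k, \ T^{-k}x \in F\}$, defined $\mu$-a.e.\ by conservativity. Setting $E_L := \bigcup_{k \geq L} F^{(k)}$, every $x \in E_L$ has $T^{-m}x \notin F \supseteq E_L$ for $m = 1, \dots, L-1$, which gives precisely the required disjointness of $T^{-l}E_L$, $l \in [L]$. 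To see $\mu(E_L) > 0$, invoke Kac's identity $\sum_{k} k \mu(F^{(k)}) = \mu(X)$ (which holds by ergodicity). If $\mu(X) = \infty$ the sum diverges, forcing $\mu(F^{(k)}) > 0$ for some $k \geq L$ (else the total would be bounded by $(L-1)\mu(F) < \infty$). If $\mu(X) < \infty$, then the failure of (A) rules out any atoms of $\mu$ (an atom would, via the same orbit argument, force $X$ to decompose as in (A)), so $F$ can be chosen with $0 < \mu(F) < \mu(X)/L$, and then $\sum_{k \geq L} k\mu(F^{(k)}) \geq \mu(X) - L\mu(F) > 0$.

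The main obstacle lies in the aperiodicity step: reconstructing the cyclic-atom decomposition from $T^{L_0} = \mathrm{id}$ a.e.\ requires a measurable cross-section to the orbit equivalence, which in full generality relies on some regularity (such as a standard Borel structure) of the underlying space. Once aperiodicity is in hand, the rest is a routine application of Kac's identity combined with atomlessness of the measure.
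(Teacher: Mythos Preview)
The paper does not prove Fact~\ref{F1}; it is quoted as a well-known result without argument, so there is no ``paper's proof'' to compare against.

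Your proof is correct, and in fact cleaner than you seem to realise. The step you flag as the main obstacle---establishing aperiodicity via a measurable cross-section---is never actually used in the rest of your argument. Your construction of $E_L$ proceeds by the dissipative/conservative dichotomy and, in the conservative case, by the return-time tower and Kac's identity; the only additional input you need is that when $\mu(X)<\infty$ and (A) fails the measure is atomless, which you obtain by the elementary orbit argument (an atom $A$ has finitely many distinct translates $T^{-n}A$ by finiteness of $\mu(X)$, and ergodicity then forces the cyclic decomposition~(A)). None of this requires the set $\{x:T^{n}x=x\}$ to be measurable or any standard-Borel hypothesis. So you may simply delete the aperiodicity paragraph and the closing caveat: what remains is a complete proof valid in the generality the paper assumes.
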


We are ready to formulate our second main result.
\begin{theorem} [Sharp constants dichotomy] \label{T1}
	Let $\XX$ be as before. Then for $p \in \{1, \infty\}$ we have $\COS(\XX, p) = \COS(\XX_1, p)$, while for $p \in (1,\infty)$ the following dichotomy occurs:
	\begin{itemize}
		\item  if \ref{F1A} from Fact~\ref{F1} holds, then $\COS(\XX, p) < \COS(\XX_1, p)$, 
		\item  if \ref{F1B} from Fact~\ref{F1} holds, then $\COS(\XX, p) = \COS(\XX_1, p)$.
	\end{itemize}
	Moreover, we have $\CCC(\XX, \infty) = \CCC(\XX_1, \infty)$ and $\CNC(\XX, \infty) = \CNC(\XX_1, \infty)$, while for $p \in [1,\infty)$ the following dichotomy occurs:
	\begin{itemize}
		\item 	if \ref{F1A} from Fact~\ref{F1} holds, then $\CCC(\XX, p) < \CCC(\XX_1, p)$ and $\CNC(\XX, p) < \CNC(\XX_1, p)$,
		\item 	if \ref{F1B} from Fact~\ref{F1} holds, then $\CCC(\XX, p) = \CCC(\XX_1, p)$ and $\CNC(\XX, p) = \CNC(\XX_1, p)$.
	\end{itemize}
\end{theorem}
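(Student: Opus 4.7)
The upper bounds in Theorem~\ref{T1} all follow from Theorem~\ref{CT}, so only the matching lower bounds in the equality regimes and the strict separations in case \ref{F1A} require new input. The easy endpoints go first: for $p=\infty$ every operator satisfies $\|T_*f\|_\infty\leq\|f\|_\infty$ with equality on constants, so the sharp constant equals $1$ on every nontrivial system. For $\TOS$ at $p=1$, the classical sunrise lemma gives $\COS(\XX_1,1)=1$, and testing $\ind{E}$ for $E$ of positive finite measure matches this on any $\XX$.

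For the equalities in case \ref{F1B} I would exploit the Rokhlin tower. Fix $\varepsilon>0$ and choose a finitely supported $g\colon\ZZ\to\RR$ (with support in $[-K,K]$, say) whose canonical strong or weak $L^p$-ratio is at least $C_\infty-\varepsilon$, where $C_\infty$ denotes the corresponding canonical sharp constant. Apply \ref{F1B} at a height $L\gg K$ to obtain $E_L\in\calB$ such that $T^{-l}(E_L)$, $l\in[L]$, are disjoint, and lift $g$ to $\XX$ by $f=\sum_{l=0}^{L-1}g(l-\lfloor L/2\rfloor)\,\ind{T^{-l}(E_L)}$. For any $x$ in the interior of the tower, the relevant forward/backward orbit of length $\leq K$ stays inside, so the maximal function of $f$ on $\XX$ exactly reproduces the canonical maximal of $g$; boundary contributions give only a relative error $O(K/L)$. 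Integrating against $\mu(E_L)$ and keeping track of either $L^p$- or weak $L^p$-quasinorms gives the reverse of the Calder\'on inequality up to a factor $1-O(K/L)-O(\varepsilon)$, and letting $L\to\infty$ then $\varepsilon\to 0$ closes the gap in all six cases.

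The delicate part is case \ref{F1A}, where $\XX$ reduces to $\ZZ/L\ZZ$ with uniform atom mass. Functions on $\XX$ are then vectors in $\RR^L$, and by compactness on the unit sphere the maximal $L^p$-ratio is attained by some extremiser, defining a finite number $C_L$ for each operator and each $p$ in the stated range. To show $C_L<C_\infty$ I plan two steps. First, $\sup_{L'\geq 1}C_{L'}=C_\infty$: given a finitely supported $h\colon\ZZ\to\RR$, insert it into one period of a very long cycle with remaining slots zero; a direct computation shows that the cyclic ratio of the resulting periodic function converges as $L'\to\infty$ to the $\ZZ$-ratio of $h$, and supremising over $h$ recovers $C_\infty$. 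Second, the sequence $(C_{L'})$ does not stabilise below $C_\infty$, which combined with the first step forces $C_L<C_\infty$ for each fixed $L$.

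The second step is the main obstacle. Given any value $C_\star$ attained by some $C_L$, my plan is to construct, for sufficiently large $L'$, a test function on $\ZZ/L'\ZZ$ with cyclic ratio strictly exceeding $C_\star$; a natural candidate is to place a long block (such as an indicator of a large arithmetic progression) or two well-separated copies of the $\ZZ/L\ZZ$-extremiser inside $\ZZ/L'\ZZ$, and verify via explicit computation of the averaged values that the gains on new averaging windows strictly dominate. For the weak $(1,1)$ cases of $\TC$ and $\TNC$, an alternative route is available through combinatorial enumeration of sublevel sets, since on a finite cyclic group the sublevel set $\{T_*f\geq\lambda\}$ is a proper subset with volume bounded in terms of $L$. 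Carrying these comparisons out cleanly for all three operators and the full $p$-range is the technical heart of the proof.
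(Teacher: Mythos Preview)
Your treatment of the endpoints $p\in\{1,\infty\}$ and of Case~\ref{F1B} via the Rokhlin tower is correct and matches the paper's Lemmas~\ref{L1} and~\ref{L5}. For Case~\ref{F1A} with $p\in(1,\infty)$ your ``embed the extremiser and strictly improve'' strategy is also the right idea; the paper's Lemma~\ref{L2} executes it more directly by lifting the $\XX_{[L]}$-extremiser periodically into $\ZZ$ (rather than into a larger cycle $\ZZ/L'\ZZ$) and observing that the one-sided maximal function picks up a fixed positive amount of extra $L^p$-mass on the block $\{-RL+1,\dots,0\}$ just to the left of the support, forcing the $\ZZ$-ratio strictly above $\COS(\XX_{[L]},p)$.

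The genuine gap is Case~\ref{F1A} at $p=1$ for $\TC$ and $\TNC$. The embedding argument does not transfer to the weak $(1,1)$ quasinorm: the extra points outside the support where the maximal function is merely positive do not contribute at the original extremising threshold $\lambda$, and lowering $\lambda$ to capture them changes $\lambda\,|\{T_*F\geq\lambda\}|$ in a way that need not increase the ratio. Your proposed ``combinatorial enumeration of sublevel sets'' is not a plan---the observation that the superlevel set has cardinality at most $L$ says nothing about whether the ratio can reach $\CCC(\XX_1,1)$ or $\CNC(\XX_1,1)$. The paper handles these two cases by entirely different, problem-specific arguments. For $\TNC$ (Lemma~\ref{L7}) one writes $\ENC(f)=E_{\rm left}\cup E_{\rm right}$ as the union of the two one-sided superlevel sets, applies $\COS(\XX_{[L]},1)=1$ to each, and uses $E_{\rm left}\cap E_{\rm right}\neq\emptyset$ to obtain the explicit bound $\CNC(\XX_{[L]},1)\leq\frac{2\lceil L/2\rceil-1}{\lceil L/2\rceil}<2$. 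For $\TC$ (Lemma~\ref{L6}) the argument is more delicate: a linear-programming/polytope-vertex reduction shows that an extremiser for $\CCC(\XX_{[L]},1)$ can be taken with rational values, whence $\CCC(\XX_{[L]},1)\in\QQ$; since $\CCC(\XX_1,1)=\frac{11+\sqrt{61}}{12}\notin\QQ$ by Melas, strict inequality follows. Neither of these ideas appears in your proposal, and the second in particular is not something your outlined strategy would produce.
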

This result has interesting consequences because some of the constants on $\XX_1$ are known.
\begin{corollary}\label{C1}
	Let $\XX$ be as before and assume that \ref{F1B} from Fact~\ref{F1} holds. Then 
	\begin{itemize}
		\item for $p=1$ we have $\COS(\XX, 1)=1$, $\CNC(\XX, 1)=2$, and $\CCC(\XX, 1) = \frac{11+\sqrt{61}}{12}$, 
		\item for $p\in (1, \infty)$ and the uncentered operator we have $\CNC(\XX, p) = c_p$, where $c_p$ is the unique positive solution of the equation $(p-1)x^p - p x^{p-1} - 1 = 0$,
		\item for $p=\infty$ we have $\COS(\XX, \infty)=\CNC(\XX, \infty)=\CCC(\XX, \infty) = 1$.
	\end{itemize} 
\end{corollary}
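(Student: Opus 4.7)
The plan is to reduce the corollary to the canonical system $\XX_1 = \ZZ$ by applying Theorem~\ref{T1} and then to read off each value from the existing catalogue of sharp constants for the Hardy--Littlewood maximal operators on $\ZZ$; no new dynamical computation will be required.

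First I would handle the $p = \infty$ bullet directly, without appealing to the dichotomy. Each average $\frac{1}{|I|} \sum_{n \in I} f(T^n x)$ is a convex combination of values of $f$, so each of $\TOS f$, $\TC f$, $\TNC f$ is pointwise bounded by $\|f\|_\infty$, giving the upper bound $1$. Testing on the indicator of a set of finite positive measure (which exists by nontriviality of $\XX$), whose maximal function equals $1$ on that set, supplies the matching lower bound, so all three $L^\infty$ constants equal $1$.

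For $p \in [1, \infty)$, I would invoke the equality portion of Theorem~\ref{T1} valid under \ref{F1B}: it collapses $\COS(\XX, p)$, $\CCC(\XX, p)$, and $\CNC(\XX, p)$ onto the corresponding constants on $\XX_1$. At this point each claimed value is known from the literature on sharp maximal constants on $\ZZ$: the one-sided weak $(1,1)$ constant equals $1$ by a direct stopping-time argument, the uncentered weak $(1,1)$ constant equals $2$ by the classical rising-sun construction, the centered weak $(1,1)$ constant equals $(11+\sqrt{61})/12$ from the known extremizer analysis for centered averages on $\ZZ$, and for $p \in (1,\infty)$ the uncentered $\ell^p \to \ell^p$ norm coincides with the positive root of $(p-1) x^p - p x^{p-1} - 1 = 0$ (the discrete analogue of the Grafakos--Montgomery-Smith formula).

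The substantive work therefore lies outside the present argument, in justifying these four $\XX_1$-level identities, each of which requires a separate delicate extremizer construction on $\ZZ$. Since those results are already available in the literature, the proof of the corollary itself reduces to assembling Theorem~\ref{T1} with the appropriate citations, and I do not anticipate any new obstacle arising at the level of the general ergodic system $\XX$.
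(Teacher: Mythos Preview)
Your approach matches the paper's: reduce to $\XX_1$ via Theorem~\ref{T1} (and handle $p=\infty$ trivially), then quote the sharp constants on $\ZZ$. The one place where the paper does a bit more than you indicate is the identification of the $\ZZ$-constants themselves. For the centered weak $(1,1)$ case and the uncentered strong $(p,p)$ case, the primary literature results (Melas; Grafakos--Montgomery-Smith) are stated on $\RR$, not on $\ZZ$, so the paper invokes an additional continuous-to-discrete transference: \cite[Theorem~1]{KMPW} for the centered case, and a short Proposition~\ref{P2} proved in the paper for the uncentered $L^p$ case. Your phrases ``known extremizer analysis for centered averages on $\ZZ$'' and ``the discrete analogue of the Grafakos--Montgomery-Smith formula'' gloss over exactly this step; otherwise the argument is the same.
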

\noindent Indeed, the equality $\COS(\XX_1, 1)=1$ is well-known, see Lemma~\ref{L1} for an easy proof. Next, $\CNC(\XX_1, 1)=2$ because $\CNC(\XX_1, 1) \leq 2$ follows from the classical covering lemma on $\ZZ$ with overlap $2$, while $\CNC(\XX_1, 1) \geq 2$ can be seen by taking $\tilde{f} = \ind{\{0\}}$ and letting $\lambda \to 0$. On the other hand, $\CCC(\XX_1, 1) = \frac{11+\sqrt{61}}{12}$ follows by combining the famous result of Melas \cite{Mel}, where the best constant in the weak type $(1,1)$ inequality for the centered maximal operator on $\RR$ is obtained, with \cite[Theorem~1]{KMPW}, where the best constants in discrete and continuous inequalities are compared to each other (in our case both constants are equal). For $p = \infty$ the claim is trivial, and for $p \in (1, \infty)$, just like before, we combine the result on $\RR$ of Grafakos and Montgomery-Smith \cite{GMS}  with the following transference principle.

\begin{proposition}\label{P2}
	Fix $p \in (1, \infty)$ and let $\XX$ be as before. Then $\CNC(\XX, p) = c_p$, where $c_p$ is the best constant in the strong type $(p,p)$ inequality for $\calM_\RR^{\rm u}$, the uncentered Hardy--Littlewood maximal operator $\RR$. 
\end{proposition}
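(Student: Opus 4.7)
The plan is to prove the two inequalities $\CNC(\XX, p) \leq c_p$ and $\CNC(\XX, p) \geq c_p$ separately, each by a transference argument.

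For the upper bound, I would first apply Theorem~\ref{CT} (with $\mathcal{O}$ the supremum norm and $\mathcal{P}(n) = n$, after re-enumerating the two-parameter family of uncentered averages so it fits the paper's setup) to reduce to the canonical system: $\CNC(\XX, p) \leq \CNC(\XX_1, p)$. Then I would transfer from $\ZZ$ to $\RR$ via step functions. Given $f \colon \ZZ \to \RR$, set $F := \sum_{n \in \ZZ} f(n)\,\ind{[n, n+1)}$ on $\RR$, so that $\|F\|_{L^p(\RR)} = \|f\|_{\ell^p(\ZZ)}$. The key observation is that for any integers $a \leq n < b$ and any $x \in [n, n+1)$, the continuous average $\frac{1}{b-a}\int_a^b |F|\,{\rm d}t$ equals the discrete average $\frac{1}{b-a}\sum_{k=a}^{b-1}|f(k)|$; as $(a,b)$ ranges, these realize every discrete average defining $\TNC f(n)$ on $\XX_1$. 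Hence $\TNC f(n) \leq \calM_{\RR}^{\rm u} F(x)$ for $x \in [n, n+1)$; integrating and using $\|\calM_{\RR}^{\rm u} F\|_{L^p} \leq c_p \|F\|_{L^p}$ yields $\|\TNC f\|_{\ell^p} \leq c_p \|f\|_{\ell^p}$, giving $\CNC(\XX_1, p) \leq c_p$.

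For the lower bound, I would observe that case \ref{F1B} of Fact~\ref{F1} is the relevant one (as in Corollary~\ref{C1}, where Proposition~\ref{P2} is applied); Theorem~\ref{T1} then yields $\CNC(\XX, p) = \CNC(\XX_1, p)$, so it suffices to prove $\CNC(\XX_1, p) \geq c_p$. Given $\varepsilon > 0$, pick a continuous, compactly supported, nonnegative $F \in L^p(\RR)$ with $\|\calM_{\RR}^{\rm u} F\|_{L^p} \geq (c_p - \varepsilon)\|F\|_{L^p}$. Using the scale invariance of $\calM_{\RR}^{\rm u}$, rescale to $F_N(x) := F(x/N)$ (keeping the same ratio) and discretize by $g_N(n) := \int_n^{n+1} F_N(t)\,{\rm d}t$ for $n \in \ZZ$. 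A direct computation shows that the discrete average of $g_N$ over $\{a, \ldots, b\}$ coincides \emph{exactly} with the continuous average of $F_N$ over $[a, b+1]$, so $\TNC g_N(n)$ equals the restriction of $\calM_{\RR}^{\rm u} F_N$ to integer-endpoint intervals through $n$. Standard Riemann-sum estimates, together with the slow variation of $F_N$ on scale $N$, then give $\|g_N\|_{\ell^p}^p \sim \|F_N\|_{L^p}^p$ and $\|\TNC g_N\|_{\ell^p}^p \sim \|\calM_{\RR}^{\rm u} F_N\|_{L^p}^p$ as $N \to \infty$, so $\|\TNC g_N\|_{\ell^p}/\|g_N\|_{\ell^p} \to \|\calM_{\RR}^{\rm u} F\|_{L^p}/\|F\|_{L^p} \geq c_p - \varepsilon$. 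Sending $\varepsilon \to 0$ concludes the bound.

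The main obstacle is making this discretization argument produce sharp constants, since $\calM_{\RR}^{\rm u} F_N$ optimizes over \emph{all} intervals while the restricted version only sees integer-endpoint ones. A standard remedy is to first mollify $F$ at scale $1/N$ (convolution with a smooth bump); this perturbs $\|F\|_{L^p}$ and $\|\calM_{\RR}^{\rm u} F\|_{L^p}$ arbitrarily little but ensures that every continuous average of $F_N$ over an arbitrary interval is uniformly close to one over an integer-endpoint interval, so no mass is lost after discretization. Everything else reduces to routine Riemann-sum-type estimates.
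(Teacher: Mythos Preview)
Your proposal is correct and follows essentially the same route as the paper: the upper bound is obtained by embedding a discrete function as a step function on $\RR$ and observing that $\calM_\RR^{\rm u}$ of the step function pointwise dominates the discrete uncentered maximal function (after first invoking Calder\'on transference to reduce from $\XX$ to $\XX_1$), and the lower bound is obtained by dilating a nearly extremal continuous function until it is slowly varying and then discretizing. The only cosmetic differences are that the paper samples at integer points rather than averaging over unit intervals, and that it cites \cite[(1.3)]{KMPW} for the discretization details rather than spelling out the mollification step you describe; also, the paper does not explicitly pass through Theorem~\ref{T1} to handle the general ergodic $\XX$, whereas you do --- your reduction is logically sound (there is no circularity, since the proof of Lemma~\ref{L5} does not use Proposition~\ref{P2}) and arguably cleaner given that the statement, read literally, is false for systems satisfying \ref{F1A}.
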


\begin{proof}
	The inequality $\CNC(\XX, p) \geq c_p$ follows by repeating the arguments used to prove \cite[(1.3)]{KMPW}. We only sketch the proof. Let $f_{\rm cont} \colon \RR \to [0,\infty)$ be a smooth function for which $c_p$ is almost attained. Using dilations we can assume that $f_{\rm cont}$ is very slowly varying. Define $f_{\rm dis} \colon \ZZ \to [0, \infty)$ by sampling $f_{\rm dis}(l) \coloneqq f_{\rm cont}(l)$. Then the constant in the maximal inequality stated for $f_{\rm dis}$ will be almost the same as the one corresponding to $f_{\rm cont}$. 
	
	On the other hand, $\CNC(\XX, p) \leq c_p$ follows easily, since for any $f_{\rm dis} \colon \ZZ \rightarrow [0,\infty)$ we have
	\[
	\frac{\|\TNC f_{\rm dis}\|_p}{\|f_{\rm dis}\|_p} \leq \frac{\|\calM_\RR^{\rm u} f_{\rm cont}\|_p}{\|f_{\rm cont}\|_p},
	\] 
	where $f_{\rm cont} \colon \RR \rightarrow [0,\infty)$ is given by $f_{\rm cont} (x) \coloneqq f_{\rm dis}(l)$ for $x \in [l, l+1), l \in \NN$. Indeed, this inequality can be easily seen, since for each $x \in [l, l+1)$ one has $\calM_\RR^{\rm u} f_{\rm cont}(x) \geq \TNC f_{\rm dis}(l)$ (it is important here that we are working with operators of uncentered type).
\end{proof}

\noindent Similar arguments give an analogue of Proposition~\ref{P2} for the one-sided operator. For the centered operator the situation is different and only the lower bound for $\CCC(\XX, p)$ can be easily transferred from the continuous setting to the discrete one, cf.~\cite[Theorem~1]{KMPW}.

\subsection{Ergodicity} \label{S1.4}
The canonical system $\mathbb X_d$ can be viewed as a topological  group with Haar measure and transformations $\mathcal T_{d,i}$ being translations by group elements, say $g_{d,i}$. Since the subgroup $G$ generated by $g_{d,i}$ is just $\ZZ^d$, the canonical system is ergodic, that is, there are no nontrivial $G$-invariant sets $E \subseteq \mathbb Z^d$. We take this opportunity to highlight the importance of ergodicity in this context.

Let us specify our measure space $(X,\mathcal B, \mu)$ to be $[1,0)$ with the usual Borel sets and Lebesgue measure. We consider the group $\rm Aut(X,\mathcal B, \mu)$ of bimeasurable automorphisms of that space, that is, invertible maps $T \colon X \to X$ such that $T,T^{-1}$ are measure-preserving.  In fact, elements of $\rm Aut(X,\mathcal B, \mu)$ are equivalence classes, with $T \sim T'$ if and only if $T(x) = T'(x)$ holds $\mu$-almost everywhere, and we shall refer to their representatives.

We say that $(T_n)_{n \in \NN}$ \emph{converges weakly} to $T$ if $\lim_{n \to \infty} \mu(T_n(E)\Delta T(E)) = 0$ for all $E \in \mathcal B$ with $\Delta$ being the symmetric difference symbol. Also, if $\mu(\{ x \in [0,1) : T^n(x) = x\})=0$ for all $n \in \NN$, then $T$ is called \emph{aperiodic}. Following Halmos \cite{Ha} we notice that for aperiodic $T$ its conjugacy orbit $\{S^{-1} T S : S \in \rm Aut(X,\mathcal B, \mu)\}$ is weakly dense. If $T$ is ergodic instead, then so are all $S^{-1}TS$, while properties of $\mu$ imply that $T$ is aperiodic as well\footnote{Note that ergodic systems satisfying \ref{F1A} from Fact~\ref{F1} lack aperiodicity so referring to $\mu$ is necessary.}. In view of ergodicity of $T_\pi(x) \coloneqq x + \pi \mod 1$, say, ergodic automorphisms are weakly dense. This fact alone suggests that many problems are reducible to the case of ergodic transformations.

Yet another relevant result is the so-called Conze principle \cite{Co}. Its original formulation is as follows. Given a sequence $(i_n)_{n \in \NN}$ of integers, for each $S \in \rm Aut(X,\mathcal B, \mu)$ the sharp constant in the weak type $(1,1)$ inequality for $S_\ast f(x) \coloneqq \sup_{ N \in \NN} |\frac{1}{N} \sum_{n=1}^N f (S^{i_n} x)|$ is not larger than its counterpart for $T_\ast$ defined analogously for any aperiodic $T \in \rm Aut(X,\mathcal B, \mu)$. The proof uses weak density of the conjugacy orbit of $T$, and the principle can be generalized in many ways. Again, by properties of $\mu$ the same is true for any ergodic $T$ instead.

The last related issue we would like to discuss is the Stein maximal principle \cite{St}. As we have seen, pointwise convergence results are often consequences of weak type inequalities, while proving the latter can be reduced to the ergodic case whenever the Conze principle is available. Stein's result, saying that in some cases pointwise convergence is equivalent to the associated weak type inequality, makes these observations more fundamental. 

Let this time $X$ be a~homogeneous space of a given compact group $G$. By this we mean that, in particular, the structure of $X$ is inherited from $G$, there is a unique normalized $G$-invariant measure $\mu$ on $X$, and $G$ acts transitively on $X$. For example, $X$ may be the $n$-dimensional sphere with $G$ consisting of all rotations of $X$. Transitivity implies that there are no nontrivial $G$-invariant sets $E \subseteq X$, and ergodicity indeed is crucial here. 

Now Stein's result reads as follows. For each $p \in [1,2]$ and each given sequence $(T_n)_{n \in \NN}$ of operators bounded on $L^p(X, \mu)$ and commuting with all transformations determined by the actions of $g \in G$ on $X$, if $\lim_{n \to \infty} T_n f(x)$ exists almost everywhere for all $f \in L^p(X, \mu)$, then $T_\ast f(x) \coloneqq \sup_{n \in \NN} |T_n f(x)|$ satisfies the weak type $(p,p)$ inequality. 

Let us check that ergodicity cannot be dropped. To see this, suppose that $X$ consists of disjoint $G$-invariant sets $X_l$, $l \in \NN$, admitting Stein's principle and such that $\mu(X_l) = 2^{-l}$. Then pointwise convergence gives weak type inequalities on all $X_l$ separately but the same for $X$ may be false, as the best constants may tend to infinity with $l$. This happens when $T_n f(x)$ equals $nf(x)$ for $x \in X_n$ and $0$ otherwise.

For another instructive example, take $X_l \coloneqq \{(l,0), (l,1), \dots, (l,l) \}$ with equal masses $2^{-l}/(l+1)$, and set $G \coloneqq \ZZ_2 \times \ZZ_3 \times \cdots$ with the action of $g=(g_1, g_2, \dots) \in G$ on $X$ given by $\tau_g(l,i) \coloneqq (l, i+g_l \mod l+1)$. Then for $g_\ast \coloneqq (1,1,\dots) \in G$ and $T_n f(x) \coloneqq \frac{1}{n} \sum_{i=1}^n f(\tau_{g_\ast}^{i^2}x)$ we see that $\lim_{n \to \infty} T_n f(x)$ exists everywhere for all $f \in L^1(X,\mu)$, while there is no weak type $(1,1)$ inequality in view of the famous result by Buczolich and Mauldin \cite{BM}.                     

\subsection{Acknowledgments} The author would like to thank Mariusz Mirek for drawing his attention to the topic discussed in this article. Moreover, the author is grateful to Mariusz Mirek and Luz Roncal for their helpful comments after reading the manuscript.

The author is also thankful to the referee for insightful comments on ergodicity, which resulted in a fruitful discussion in Subsection~\ref{S1.4}.   

The author was supported by the Basque Government (BERC 2022-2025), by the Spanish State Research Agency
(CEX2021-001142-S and RYC2021-031981-I), and by the Foundation for Polish Science (START 032.2022).  

\section{Proof of Theorem~\ref{CT}} \label{S2}

Let us now prove Theorem~\ref{CT}.

\begin{proof}[Proof of Theorem~\ref{CT}]
We only prove the first inequality and the second one can proved by using very similar arguments.

Let $f = (f_1, \dots, f_m)$ with $f_j \in L^{p_j}(\mathbb X)$. For each $K \in \NN$, we define 
\[
F_j^K (x, l) \coloneqq f_j ( T_1^{- KR_K + l_1} \cdots T_d^{- KR_K + l_d} x )
\cdot \ind{ \{ l \in [K R_K]^d \} },
\qquad x \in X, \, l \in \ZZ^d, 
\]
with $R_K \in \NN$ is so large that $|P_{i,j}(l)| \leq R_K$ for all $i,j$ when $\| l \|_\infty \leq K$. Then
\[
\| F_j^K \|_{p_j} = (KR_K)^{d/p_j} \|f_j\|_{p_j},
\]
by the Fubini--Tonelli theorem, since $T_i$ are measure preserving. The first norm above is with respect to the product of $\mu$ and counting measure. Set $F^K = (F_1^K, \dots, F_m^K)$. We have
\[
\mathcal O_K (A^{\mathcal P}_{N} F^K(x,l) : N \in [K]) 
=
\mathcal O_K (A^{\mathcal P}_{N} f(T_1^{- KR_K + l_1} \cdots T_d^{- KR_K + l_d} x) : N \in [K]) 
\]
for all $x \in X$ and $l \in \{ R_K+1, \ldots, (K-1)R_K \}^d$ when $K \geq 3$. Hence
\[
\| \mathcal O_K (A^{\mathcal P}_{N} F^K : N \in [K]) \|_{p_0}
\geq ((K-2)R_K)^{d/p_0} \| \mathcal O_K (A^{\mathcal P}_{N} f : N \in [K]) \|_{p_0}
\]
by the Fubini--Tonelli theorem, since $T_i$ are measure preserving. Letting $K \to \infty$ gives
\[
\| \mathcal O (A^{\mathcal P}_{N} f : N \in \NN) \|_{p_0}
\leq
\mathcal C_{\mathcal O}^{\mathcal P}(\XX_d, p, {\rm s})
\lim_{K \to \infty}
((K-2)R_K)^{-d/p_0}
\prod_{j \in [m]} \| F_j^K\|_{p_j}
\]
by Fatou's lemma, \eqref{O1}, \eqref{O2}, and the definition of $\mathcal C_{\mathcal O}^{\mathcal P}(\XX_d, p, {\rm s})$. The limit is equal to
\[
\lim_{K \to \infty}
((K-2)R_K)^{-d/p_0} (K R_K)^{d/p_1 + \cdots + d/p_m}
\prod_{j \in [m]} \| f_j \|_{p_j}
=
\prod_{j \in [m]} \| f_j \|_{p_j}
\]
thanks to the H\"older exponent hypothesis. This completes the proof. 
\end{proof}

Once we have seen the proof of Theorem~\ref{CT}, several remarks are in order.
\begin{itemize}
	\item Apart from those specified in \eqref{INQ}, other types of estimates could be considered in Theorem~\ref{CT}. One of the key factors is good scaling which in our case was assured by assuming $\frac{1}{p_1} + \cdots + \frac{1}{p_m} = \frac{1}{p_0}$.    
	\item Properties of $\XX_d$ resemble the structure of the group of transformations generated by $T_i$. Dropping commutativity is possible but it leads to more complicated canonical systems, see \cite{IMMS} for the case of nilpotent groups of step $2$.    
	\item Instead of $A_N^{\mathcal P}$, one can consider different operators, for example the averaging operators used to define $\TOS, \TC, \TNC$ from Theorem~\ref{T1}, see Proposition~\ref{P1}. 
\end{itemize} 

\begin{proposition} \label{P1}
	Under the assumptions of Theorem~\ref{T1}, for each $p \in [1,\infty]$ we have $\COS(\XX, p) \leq \COS(\XX_1, p)$, $\CCC(\XX, p) \leq \CCC(\XX_1, p)$, and $\CNC(\XX, p) \leq \CNC(\XX_1, p)$.
\end{proposition}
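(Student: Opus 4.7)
The plan is to run the proof of Theorem~\ref{CT} essentially verbatim, with only cosmetic adjustments. Theorem~\ref{CT} does not apply directly because the averages defining $\TOS, \TC, \TNC$ do not exactly match the operators $A_N^{\mathcal P}$: the one-sided average starts at $n=0$ rather than $n=1$, and the centered and uncentered averages use shift ranges that depend on the averaging parameter. Still, each of the three maximal operators is a supremum over a countable family of finitely supported averages, approximated from below by its truncations $T_{*,K}^{\rm os}$, $T_{*,K}^{\rm c}$, $T_{*,K}^{\rm u}$ in which the supremum is restricted to averages using shifts of absolute value at most $K-1$; these play the role of $\mathcal O_K$ from \eqref{O1} and \eqref{O2}.

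First I would treat $\TOS$ with $p \in (1, \infty)$. Given $f \in L^p(\XX)$, lift it to $F^K \colon X \times \ZZ \to \RR$ by
\[
F^K(x,l) \coloneqq f(T^{-K^2 + l} x) \ind{\{l \in [K^2]\}},
\]
so that $\|F^K\|_{L^p(\mu \otimes \#)}^p = K^2 \|f\|_p^p$ by Fubini--Tonelli and measure-preservingness of $T$. For $l \in \{1, \dots, K^2 - K + 1\}$ and $n \in \{0, \dots, K-1\}$ one has $l+n \in [K^2]$, so $T_{*,K}^{\rm os} F^K(x, l) = T_{*,K}^{\rm os} f(T^{-K^2+l}x)$; integrating over the interior and using measure-preservingness again yields the lower bound
\[
\|T_{*,K}^{\rm os} F^K\|_{L^p(\mu \otimes \#)}^p \geq (K^2 - K + 1)\, \|T_{*,K}^{\rm os} f\|_p^p.
\]
On the other hand, applying the defining inequality of $\COS(\XX_1, p)$ to $F^K(x, \cdot) \in L^p(\ZZ)$ for each fixed $x$, using $T_{*,K}^{\rm os} \leq T_*^{\rm os}$, and integrating over $X$ gives the matching upper bound $\|T_{*,K}^{\rm os} F^K\|_{L^p(\mu \otimes \#)}^p \leq \COS(\XX_1, p)^p K^2 \|f\|_p^p$. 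Combining these two estimates, dividing by $K^2 - K + 1$, letting $K \to \infty$, and invoking monotone convergence (justified because $T_{*,K}^{\rm os} f \nearrow T_*^{\rm os} f$ pointwise) proves $\COS(\XX, p) \leq \COS(\XX_1, p)$ for $p \in (1, \infty)$.

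The remaining cases require only cosmetic changes. The weak type $(1,1)$ bound is obtained by replacing $\|\cdot\|_p^p$ throughout with the distributional quantity $\lambda\, \mu(\{|\cdot| > \lambda\})$ and taking $\sup_{\lambda > 0}$ at the end. The case $p = \infty$ is trivial because each average is a convex combination of values of $f$. For $\TC$ and $\TNC$ the same proof works, except that the truncated operators use shifts $n \in \{-(K-1), \dots, K-1\}$, which forces the admissible interior to be $\{K, \dots, K^2 - K + 1\}$ of cardinality $K^2 - 2K + 2$; the ratio $K^2/(K^2 - 2K + 2)$ still tends to $1$. There is no substantive obstacle here --- the only step requiring care is the bookkeeping needed to match the lifted operator with the shifted original on an interior set of asymptotically full relative measure, exactly as in the proof of Theorem~\ref{CT}.
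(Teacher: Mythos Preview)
Your proposal is correct and is precisely the approach the paper takes: its entire proof of Proposition~\ref{P1} reads ``We repeat the proof of Theorem~\ref{CT},'' and you have simply written out that repetition explicitly, including the minor bookkeeping adjustments (indexing from $n=0$, two-sided shift ranges for $\TC$ and $\TNC$, and the weak-type and $p=\infty$ endpoints) that the paper leaves implicit.
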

\begin{proof}
	We repeat the proof of Theorem~\ref{CT}.
 \end{proof}

\section{Proof of Theorem~\ref{T1}} \label{S3}
 
The proof of Theorem \ref{T1} is divided into several lemmas. Throughout this section $d=m=k=1$, $T$ is ergodic, and $P(n)=n$.
 
\begin{lemma}[Sharp constant equals $1$] \label{L1}
We have $\COS(\XX_1, 1) = \COS(\XX_1, \infty) = \CCC(\XX_1, \infty) = \CNC(\XX_1, \infty) = 1$. Consequently, $\COS(\XX, 1) = \COS(\XX_1, 1)$, $\COS(\XX, \infty) = \COS(\XX_1, \infty)$, $\CCC(\XX, \infty) = \CCC(\XX_1, \infty)$, and $\CNC(\XX, \infty) = \CNC(\XX_1, \infty)$. 
\end{lemma}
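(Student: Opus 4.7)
The plan is to first verify the four sharp-constant identities on $\XX_1$, and then deduce the four equalities on a general ergodic $\XX$ by combining Proposition~\ref{P1} (for the $\leq$ direction) with explicit test functions (for the $\geq$ direction).

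On $\XX_1 = \ZZ$, the three $L^\infty$ bounds are elementary: for each of $\TOS, \TC, \TNC$, every averaged sum is a convex combination of values of $|f|$, so $T_\ast f(x) \leq \|f\|_\infty$ pointwise, giving $\leq 1$ in all three cases. Equality is witnessed by $f = \ind{\{0\}}$, for which $T_\ast f(0) = 1 = \|f\|_\infty$ regardless of which of the three operators one uses. The one nontrivial identity is $\COS(\XX_1, 1) = 1$. For the upper bound I would invoke the one-sided Calder\'on--Zygmund / Riesz sunrise covering argument on $\ZZ$: given $f \in \ell^1(\ZZ)$ and $\lambda > 0$, at every $x$ with $\TOS f(x) \geq \lambda$ pick the maximal $N_x \geq 0$ satisfying $\sum_{n=0}^{N_x} |f(x+n)| \geq \lambda(N_x+1)$; starting from the leftmost point of the level set, greedily extract a pairwise disjoint subfamily of the intervals $[x, x+N_x]$ that still covers $\{\TOS f \geq \lambda\}$; sum the selection inequalities to obtain $\lambda \cdot \#\{\TOS f \geq \lambda\} \leq \|f\|_1$. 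For the matching lower bound, $f = \ind{\{0\}}$ works again: a direct computation gives $\TOS f(x) = 1/(|x|+1)$ for $x \leq 0$ and $0$ otherwise, so at $\lambda = 1$ we already have $\lambda \cdot \#\{\TOS f \geq \lambda\} = 1 = \|f\|_1$.

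To pass to a general ergodic $\XX$, Proposition~\ref{P1} gives the $\leq$ direction in each of the four identities for free. For the matching lower bounds I would use the nontriviality of $(X, \calB, \mu)$ to pick $E \in \calB$ with $\mu(E) \in (0,\infty)$, and test with $f = \ind{E}$. Since the trivial scale ($N = 0$ for $\TOS$ and $\TC$, and $\underline{r} = \overline{r} = 0$ for $\TNC$) already gives $T_\ast f(x) \geq |f(x)|$, each of $\TOS f,\TC f,\TNC f$ is at least $1$ on $E$. This at once yields $\|T_\ast f\|_\infty \geq 1 = \|f\|_\infty$ for all three operators; and, taking $\lambda = 1$ in the weak norm, $\mu(\{\TOS f \geq 1\}) \geq \mu(E) = \|f\|_1$, whence $\COS(\XX,1) \geq 1$.

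None of these steps presents a real obstacle; the only piece requiring some care is the one-sided Riesz-type covering on $\ZZ$, and everything else reduces to evaluating $T_\ast$ on a suitable indicator function.
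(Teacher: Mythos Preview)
Your proposal is correct and follows essentially the same route as the paper: reduce the second part to Proposition~\ref{P1} together with the trivial lower bound $|f|\le T_\ast f$, note that the $p=\infty$ constants are obviously $1$, and handle $\COS(\XX_1,1)\le 1$ by a one-sided covering argument on $\ZZ$. The only cosmetic difference is that the paper selects the \emph{smallest} admissible $N_j$ at each point of the level set and disjointifies via successive differences $D_j=B_j\setminus\bigcup_{i<j}B_i$, whereas you take the maximal $N_x$ and run a greedy left-to-right selection; both variants yield the same inequality.
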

\begin{proof}
We only need to prove the first part and the rest of the claim follows directly from Proposition~\ref{P1} and the fact that $|f| \leq \min \{ \TOS f, \TC f, \TNC f\}$. The case $p = \infty$ is obvious. Also $\COS(\XX_1, 1) = 1$ is well-known but we include the proof for the sake of completeness. 

We shall show that $\COS(\XX_1, 1) \leq 1$ and the remaining inequality $\COS(\XX_1, 1) \geq 1$ is obvious. Let $f \in L^1(\XX_1)$ be nonnegative and assume that $\lambda \in (0,\infty)$ is such that the set
\[
\EOS(f) \coloneqq \{ l \in \ZZ : |\TOS f(l)| \geq \lambda \}
\] 
is nonempty. Thus, $\EOS(f) = \{ l_1, \dots, l_J\}$ for some integers $l_1 < \dots < l_J$, $J \in \NN$. For each $j \in [J]$ we let $N_j$ be the smallest number $N \in \NN \cup \{0\}$ such that $\frac{1}{N+1} \sum_{n=0}^{N} f(l_j+n) \geq \lambda$. We set $B_j \coloneqq \{l_j, \ldots, l_j+N_j \}$ and observe that $ \EOS(f) \subseteq \bigcup_{j=1}^J B_j$. Next, define $D_1 \coloneqq B_1$ and $D_j \coloneqq B_j \setminus \bigcup_{i=1}^{j-1} B_i$ for $1 \neq j \in [J]$. Then $D_j$ are disjoint, we have $\bigcup_{j=1}^J B_j = \bigcup_{j=1}^J D_j$, and for each $j$ either $D_j = \emptyset$ or $D_j = \{ l_j+r_j, \ldots, l_j+N_j \}$ for some $r_j \in [N_j] \cup \{0\}$. In the latter case, by the definition of $N_j$, we have $\sum_{l \in D_j} f(l) \geq \lambda \, |D_j|$. Consequently,
\[
\lambda \, |\EOS(f)| \leq \sum_{j \in [J]} \lambda \, |D_j| \leq \sum_{j \in [J]} \sum_{l \in D_j} f(l) \leq \|f\|_1
\]
which justifies $\COS(\XX_1, 1) \leq 1$. 
\end{proof}

Let us now consider the case $p \in (1, \infty)$ for atomic systems.

\begin{lemma}[Case~\ref{F1A}, $p \in (1,\infty)$] \label{L2}
Assume that \ref{F1A} from Fact~\ref{F1} holds. Then for each $p \in (1, \infty)$ we have $\COS(\XX, p) < \COS(\XX_1, p)$, $\CCC(\XX, p) < \CCC(\XX_1, p)$, and $\CNC(\XX, p) < \CNC(\XX_1, p)$.
\end{lemma}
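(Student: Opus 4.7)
The plan is to exploit the finite-dimensionality of $L^p(\XX)$ in case \ref{F1A} via compactness, and then to show that any extremizer on $\XX$ fails to capture the ``outside the support'' contribution available on $\XX_1$. Up to an inessential measure rescaling, I identify $\XX$ with $\ZZ/L\ZZ$ equipped with counting measure and with $T$ acting by $l \mapsto l+1 \bmod L$.

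For each $T_\ast \in \{\TOS, \TC, \TNC\}$ and each $p \in (1, \infty)$, the ratio $\|T_\ast f\|_p/\|f\|_p$ is continuous on the compact, finite-dimensional sphere $\{f \in L^p(\XX) : \|f\|_p = 1\}$, so the corresponding sharp constant $C_\XX \coloneqq \mathcal C(\XX, p)$ is attained by some $f_0$. Replacing $f_0$ by $|f_0|$ (which does not decrease the operator value and preserves the $L^p$ norm), I may assume $f_0 \geq 0$; since $f_0 \not\equiv 0$, the mean $\bar f_0 \coloneqq \frac{1}{L}\sum_{l=1}^{L} f_0(l) > 0$. This strict positivity of $\bar f_0$ is the quantitative input that will separate the two constants.

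Next I build a test function on $\XX_1$ by periodic concatenation. Let $g_N \in L^p(\XX_1)$ denote the truncation of the periodic extension of $f_0$ to $[0, NL-1]$, so $\|g_N\|_p^p = N\|f_0\|_p^p$. A short convex-combination identity --- e.g.\ $A_{M+L} = \tfrac{M}{M+L} A_M + \tfrac{L}{M+L}\bar f_0$ for the one-sided averages along a period-$L$ orbit, together with its two-sided analogues for $\TC$ and $\TNC$ --- shows that the supremum defining $T_\ast f_0$ is realized by a window of length $O(L)$. It follows that for $l$ in the bulk of the support (at distance at least $L$ from each end), $T_\ast g_N(l) = T_\ast f_0(l \bmod L)$ when $T_\ast$ is interpreted on $\XX_1$, contributing $(N - O_L(1))\,C_\XX^p$ to $\|T_\ast g_N\|_p^p$. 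Meanwhile, outside the support one obtains a strictly positive tail: for $\TOS$, choosing the one-sided window $[-k, NL-1]$ at the point $l = -k$ yields $\TOS g_N(-k) \geq NL\,\bar f_0/(NL+k)$, whose $p$-th power sums over $k \in [1, NL]$ to approximately $L \bar f_0^p N/(p-1)$; centered or two-sided windows of radius comparable to $NL$ give analogous lower bounds for $\TC$ and $\TNC$, now producing tails on both sides of the support.

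Putting the two contributions together gives, for some constant $c = c(p, T_\ast) > 0$,
\[
\frac{\|T_\ast g_N\|_p^p}{\|g_N\|_p^p} \geq \frac{N - O_L(1)}{N}\, C_\XX^p + \frac{c\, L\, \bar f_0^p}{\|f_0\|_p^p},
\]
and letting $N \to \infty$ yields $\mathcal C(\XX_1, p)^p \geq C_\XX^p + c L \bar f_0^p/\|f_0\|_p^p > C_\XX^p$, which is the claimed strict inequality. The main obstacle will be the careful bookkeeping of boundary losses against tail gains: one has to verify that for each of the three operators the supremum in the truncated periodic setting is genuinely realized by a window entirely contained in $[0, NL-1]$ (so that the bulk contribution is at least $(N - O_L(1))\,C_\XX^p$), and that the centered and uncentered tail estimates, where averaging is two-sided, yield a positive constant $c$ uniformly in the extremizer.
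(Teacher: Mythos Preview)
Your argument is correct and follows essentially the same route as the paper's own proof: reduce to the cyclic system $\XX_{[L]}$, periodically concatenate a (near-)extremizer to build a test function on $\ZZ$, recover the bulk contribution $\approx C_\XX^p$ from the support, and harvest a strictly positive tail from points just outside the support to force a strict gap. The only noteworthy differences are cosmetic: you invoke compactness of the unit sphere in $L^p(\XX_{[L]})$ to work with an exact extremizer $f_0$ (which spares you the paper's $\epsilon$-management), and you make explicit the convex-combination identity showing that the maximal window has length $O(L)$ (a point the paper leaves implicit in its claim that $\|\TOS F \cdot \ind{[RL]}\|_p \geq (1-\epsilon)^2 \COS(\XX_{[L]},p)\|F\|_p$ for large $R$).
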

\begin{proof}
We only prove $\COS(\XX, p) < \COS(\XX_1, p)$ and the remaining inequalities can be proved by using very similar arguments.

It suffices to show that for each fixed $L \in \NN$ and $p \in (1,\infty)$ we have $\COS(\XX_{[L]}, p) < \COS(\XX_1, p)$, where $\XX_{[L]} = ([L], 2^{[L]}, \#_{[L]}, T_{[L]})$ is the finite shift system with $L$ elements, that is, $T_{[L]}(l) \coloneqq l+1$ for $l \in [L-1]$ and $T_{[L]}(L) \coloneqq 1$.

Given $\epsilon \in (0,1)$ let $f \colon [L] \to [0,\infty)$ be such that $\| \TOS f \|_p \geq (1-\epsilon) \COS(\XX_{[L]}, p) \|f\|_p$. Then for a large parameter $R \in \NN$ we define $F \colon \ZZ \to [0,\infty)$ by
\[
F(l) \coloneqq \left\{ \begin{array}{rl}
f(l \ {\rm mod} \ L) & \textrm{for } l \in [RL], \\
0 & \textrm{otherwise}, \end{array} \right.
\]
where we identify $f(0)$ with $f(L)$.
Note that $\|F\|_p=R^{1/p} \|f\|_p$. Moreover, if $R$ is sufficiently large (with respect to $L$, $p$, and $\epsilon$), then 
\[
\| \TOS F \cdot \ind{[RL]} \|_p \geq (1-\epsilon)^2 \COS(\XX_{[L]}, p) \|F\|_p.
\]
For each $l \in \{-RL+1,\ldots, 0\}$, by using H\"older's inequality, we obtain
\[
\TOS F(l) \geq \frac{1}{2RL} \sum_{n \in [RL]} F(n) = \frac{1}{2L} \sum_{n \in [L]} f(n) \geq \frac{\|\TOS f\|_\infty}{2L} \geq \frac{\|\TOS f\|_p}{2 L^{1+1/p}} \geq \frac{(1-\epsilon) \COS(\XX_{[L]}, p)}{2 L^{1+1/p} R^{1/p}} \|F\|_p. 
\] 
Consequently,
\[
\| \TOS F \cdot \ind{\{-RL+1,\ldots, RL\}} \|_p^p \geq \Big( (1-\epsilon)^{2p} +
RL \cdot \frac{(1-\epsilon)^p}{2^p L^{p+1}R}
\Big) \, \COS(\XX_{[L]}, p)^p \|F\|_p^p > \COS(\XX_{[L]}, p)^p \|F\|_p^p,
\]
provided that $\epsilon$ sufficiently small (with respect to $L$ and $p$).
\end{proof}

Next we consider the case $p \in [1, \infty)$ for other systems.

\begin{lemma}[Case~\ref{F1B}, $p \in [1, \infty)$] \label{L5}
Assume that \ref{F1B} from Fact~\ref{F1} holds. Then for each $p \in [1, \infty)$ we have $\COS(\XX, p) = \COS(\XX_1, p)$, $\CCC(\XX, p) = \CCC(\XX_1, p)$, and $\CNC(\XX, p) = \CNC(\XX_1, p)$.
\end{lemma}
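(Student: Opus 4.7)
By Proposition~\ref{P1} we already have ``$\leq$'' in each of the three identities, so only the reverse inequalities require work. I will describe the argument for $\COS(\XX,p) \geq \COS(\XX_1,p)$ when $p \in (1,\infty)$; the centered and uncentered variants, as well as the weak-type case $p=1$, will follow from essentially the same construction.

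Fix $\epsilon \in (0,1)$. Since compactly supported functions are dense in $\ell^p(\ZZ)$, I would first pick a canonical near-extremizer $f \colon \ZZ \to \RR$ supported in $[-M,M]$ with
\[
\|\TOS f\|_p \geq (1-\epsilon)\,\COS(\XX_1,p)\,\|f\|_p,
\]
where $\TOS f$ is taken on $\XX_1$. Because $f$ has compact support, $\TOS f(k)$ decays as $|k|\to\infty$, so I may also fix $K \geq M$ with $\sum_{|k|\leq K} |\TOS f(k)|^p \geq (1-\epsilon) \|\TOS f\|_p^p$. Next I invoke Fact~\ref{F1}\ref{F1B} with $L$ chosen much larger than $K$ to obtain $E_L \in \calB$ with $\mu(E_L)\in(0,\infty)$ and $T^{-l}(E_L)$, $l\in[L]$, pairwise disjoint. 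Relabelling $B_j \coloneqq T^{j-1}(T^{-L}(E_L))$ for $j\in[L]$ produces a Rokhlin tower with $\mu(B_j)=\mu(E_L)$ and $T(B_j)=B_{j+1}$ for $j\in[L-1]$.

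Fix a center index $j_0$ with $K < j_0 < L-K$, and set
\[
F(x) \coloneqq \begin{cases} f(j-j_0), & x\in B_j,\ |j-j_0|\leq M,\\ 0, & \text{otherwise,}\end{cases}
\]
so that $\|F\|_p^p = \mu(E_L)\,\|f\|_p^p$. For $x\in B_j$ with $|j-j_0|\leq K$, the orbit satisfies $T^n x \in B_{j+n}$ for every $0 \leq n \leq L-j$, and since $L-j \geq K$ exceeds every $N$ that contributes non-trivially to $\TOS f(j-j_0)$ (the relevant sup being effectively over $N \leq 2K$ because $f$ is supported in $[-K,K]$), the one-sided partial averages of $F$ at $x$ coincide with those of $f$ at the integer $j-j_0$. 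Hence $\TOS F(x) \geq \TOS f(j-j_0)$ on that slab, and
\[
\|\TOS F\|_p^p \geq \mu(E_L) \sum_{|k|\leq K} |\TOS f(k)|^p \geq (1-\epsilon)\,\mu(E_L)\,\|\TOS f\|_p^p.
\]
This gives $\|\TOS F\|_p/\|F\|_p \geq (1-\epsilon)^{1+1/p}\,\COS(\XX_1,p)$, and letting $\epsilon \to 0$ yields the desired lower bound.

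For the centered and uncentered maximal operators the same $F$ works without modification, because for $x\in B_j$ with $|j-j_0|\leq K$ the whole two-sided orbit $T^n x$, $|n|\leq K$, stays in the tower, and the relevant two-sided averages of $F$ therefore agree with those of $f$. For the weak-type case $p=1$ I would pick $\lambda > 0$ nearly attaining $\|\TOS f\|_{1,\infty}$ and note that the level set $\{k \in \ZZ : \TOS f(k) \geq \lambda\}$ is finite (hence contained in $[-K,K]$ after enlarging $K$), after which the same transfer gives $\mu(\{x : \TOS F(x)\geq\lambda\}) \geq \mu(E_L)\,|\{k : \TOS f(k)\geq\lambda\}|$, and the conclusion follows from $\|F\|_1 = \mu(E_L)\|f\|_1$. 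The main delicacy throughout is to ensure that the effective ``mass'' of the canonical maximal function (measured via $\ell^p$ or a single critical level set) is captured inside a bounded window of integers, so that a Rokhlin tower of sufficient height supplies a ``safe'' slab on which the transferred function faithfully simulates~$f$.
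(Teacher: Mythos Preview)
Your argument is correct and follows essentially the same route as the paper: transplant a compactly supported near-extremizer from $\ZZ$ onto a Rokhlin tower supplied by Fact~\ref{F1}\ref{F1B}, and observe that on the interior of the tower the ergodic averages coincide with the canonical ones. (One bookkeeping slip: the constraint $K<j_0<L-K$ only yields $L-j>0$ for $|j-j_0|\leq K$, not $L-j\geq 2K$ as you actually need to cover all relevant $N\leq 2K$; simply take $L$ larger and, say, $3K<j_0<L-3K$.)
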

\begin{proof}
	We only prove $\COS(\XX, p) = \COS(\XX_1, p)$ for $p \in (1, \infty)$ and the remaining equalities or the case $p=1$ can be verified by using very similar arguments.
	
 Given $\epsilon \in (0,1)$ one can find $L \in \NN$ and $F \colon \ZZ \to [0, \infty)$ such that $F(l)=0$ for each $l \in \ZZ \setminus [L]$ and
 \[
 \| \TOS F \cdot \ind{[L]} \|_p \geq (1-\epsilon) \COS(\XX_1, p) \|F\|_p.
 \]
 Choose $E_L \subseteq X$ as in \ref{F1B} from Fact~\ref{F1}, and define $f  \in L^p(\XX)$ by
 \[
 f(x) \coloneqq \left\{ \begin{array}{rl}
 F(L+1-l) & \textrm{for } x \in T^{-l}(E_L), \ l \in [L],\\
 0 & \textrm{otherwise}. \end{array} \right.
 \]
 Then it is easy to see that $\|\TOS f \|_p \geq (1-\epsilon) \COS(\XX_1, p) \|f\|_p$.  
\end{proof}

It remains to consider the case $p=1$ for atomic systems. We only focus on $\TC$ and $\TNC$, since the claim for $\TOS$ has already been proven in Lemma~\ref{L1}.

\begin{lemma}[Case~\ref{F1A}, $p = 1$, centered operator] \label{L6}
	Assume that \ref{F1A} from Fact~\ref{F1} holds. Then we have $\CCC(\XX, 1) < \CCC(\XX_1, 1)$.
\end{lemma}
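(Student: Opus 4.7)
Plan: Using Case~(A), $\XX$ consists of $L$ atoms of equal mass cyclically permuted by $T$, so $\CCC(\XX, 1) = \CCC(\XX_{[L]}, 1)$, and Proposition~\ref{P1} gives the non-strict bound $\leq \CCC(\XX_1, 1) = \tfrac{11+\sqrt{61}}{12}$; the task is to exclude equality.

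I would argue by contradiction, in the spirit of Lemma~\ref{L2}. Assume $\CCC(\XX_{[L]}, 1) = \CCC(\XX_1, 1)$. After normalising $\|f\|_1 = 1$, the relevant parameter space $(f, \lambda) \in [0, 1]^L \times [1/L, 1]$ (outside this range the ratio does not exceed $1$) is compact, and the weak-type ratio $(f, \lambda) \mapsto \lambda \cdot |\{l \in [L] : \TC f(l) \geq \lambda\}|$ is upper semi-continuous, so the supremum is attained by some $(f^*, \lambda^*)$, necessarily with $\lambda^* > \bar{f^*} = 1/L$. Following the construction of Lemma~\ref{L2}, I would then extend $f^*$ periodically to $F_R$ on $[RL] \subset \ZZ$ (zero elsewhere). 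A direct computation shows that for $l$ in the interior $[L, RL - L]$ the cyclic $\XX_{[L]}$-windows realising $\TC f^*(l \bmod L)$ can be executed as genuine $\ZZ$-windows of $F_R$, so $\TC F_R(l) \geq \TC f^*(l \bmod L)$. Consequently $|\{l : \TC F_R(l) \geq \lambda^*\}| \geq (R-2) \cdot |\{l \in [L] : \TC f^*(l) \geq \lambda^*\}|$, and the $\ZZ$-weak-type ratio of $F_R$ at threshold $\lambda^*$ tends to $\CCC(\XX_1, 1)$ as $R \to \infty$, making $(F_R)_R$ a Melas-extremising sequence on $\ZZ$.

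The hard part will be extracting a contradiction from the rigid $L$-periodic structure of $(F_R)_R$. One route is to invoke the structural description of Melas's extremisers on $\RR$ from \cite{Mel} (transferred to $\ZZ$ via \cite[Theorem~1]{KMPW}): extremisers involve mass concentrations at incommensurable scales, incompatible with $L$-periodic support for fixed $L$. A more self-contained alternative is to perturb $F_R$ near the boundary of $[RL]$ so as to push $\TC F_R$ above $\lambda^*$ at new atoms, strictly increasing the $\ZZ$-weak-type ratio past $\CCC(\XX_{[L]}, 1)$. This perturbation is noticeably subtler than its $L^p$ analogue in Lemma~\ref{L2}: centred averages of the unperturbed $F_R$ at points outside $[RL]$ are bounded above by $\bar{f^*}/2 < \lambda^*$, so exterior atoms do not spontaneously enter the super-level set; the perturbation must raise $\TC F_R$ above $\lambda^*$ at new atoms while keeping the $L^1$-mass cost subcritical relative to $\CCC(\XX_{[L]}, 1)$, and it is precisely here where Case~(A)'s finite cyclic structure is expected to obstruct attaining the Melas constant.
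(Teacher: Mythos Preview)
Your reduction to the finite cyclic system $\XX_{[L]}$ and the compactness argument producing an extremiser $(f^*,\lambda^*)$ are both fine and match the paper's opening moves. The genuine gap is that neither of your two routes to a contradiction is actually carried out. Route~1 invokes a ``structural description of Melas's extremisers'' from \cite{Mel}, but no such classification result is available in a form that rules out $L$-periodic almost-extremising sequences; \cite{Mel} constructs extremisers rather than characterising all of them, and the transference in \cite{KMPW} concerns constants, not extremiser structure. Route~2, as you yourself concede, is considerably more delicate than its strong-type analogue in Lemma~\ref{L2}: for the weak type $(1,1)$ functional, atoms outside $[RL]$ do not automatically enter the super-level set, so one would have to raise $\TC F_R$ above $\lambda^*$ at new points while keeping the added $L^1$-mass subcritical. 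You describe the obstruction but give no construction overcoming it, and it is not at all clear one exists.

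The paper's argument is entirely different and sidesteps these difficulties. After producing an extremiser $f$ on $[L]$ with threshold normalised to $\lambda=1$, it observes that the constraints ``the centered sum at $l_j$ of radius $N_j$ is $\geq 2N_j+1$'' together with the box constraints $0\leq x_l\leq L$ cut out a nonempty compact convex polytope $\calS\subset\RR^L$, on which the linear functional $\Lambda(x)=\sum_l x_l$ attains its minimum at a vertex. That vertex solves a linear system with integer coefficients, hence has rational coordinates, and the corresponding $f^*$ is still an extremiser. Consequently $\CCC(\XX_{[L]},1)=|E^{\rm c}_1(f^*)|/\|f^*\|_1\in\QQ$. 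Since $\CCC(\XX_1,1)=\tfrac{11+\sqrt{61}}{12}\notin\QQ$, strict inequality follows at once. This rationality-via-linear-programming step (modelled on \cite[Theorem~3]{Mel}) is the key idea your proposal is missing.
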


\begin{proof}
	It suffices to show that for each fixed $L \in \NN$ we have $\CCC(\XX_{[L]}, 1) < \CCC(\XX_1, 1)$, where $\XX_{[L]}$ is as in Lemma~\ref{L2}. We follow the proof of \cite[Theorem 3]{Mel}.
	
	First observe that there exists $f_0 \colon [L] \to [0,\infty)$ such that $\| \TC f_0 \|_{1, \infty} = \CCC(\XX_{[L]}, 1) \| f_0 \|_1$. Indeed, let $(f_j)_{j \in \NN}$ be a sequence of nonnegative functions such that
	\[
	\lim_{j \to \infty} \frac{\| \TC f_j \|_{1, \infty} }{ \|f_j\|_1} = \CCC(\XX_{[L]}, 1).
	\]
	By applying the appropriate translation and scaling we can assume that $f_j(1) = \| f_j \|_\infty = 1$ holds. Choose a subsequence $(j_i)_{i \in \NN}$ such that $\lim_{i \to \infty} f_{j_i}(l)$ exists for each $l \in [L]$. Then $f_0$ defined by $ f_0(l) \coloneqq \lim_{i \to \infty} f_{j_i}(l)$ is the function we are looking for. Similarly, observe that there exists $\lambda \in (0,1]$ such that $\lambda \, |\EC(f_0) | = \CCC(\XX_{[L]}, 1) \| f_0 \|_1$. We define $f \coloneqq f_0 / \lambda$ and note that $f(l) \in [0,L]$ holds for each $l \in [L]$ (if $f(l)>L$, then $\| \TC f \|_{1,\infty} < \|f\|_1$, which leads to a contradiction). Below we identify $f$ with its $L$-periodic extension.
	
	We have $E^{\rm c}_1(f) = \{ l_1, \dots, l_J\}$ for some $J \in [L]$ and $l_1 < \dots < l_J$. For each $j \in [J]$ we choose $N_j \in \NN \cup \{0\}$ such that $f(l_j-N_j) + \dots + f(l_j+N_j) \geq 2N_j+1$. Consider the set
	\[
	\calS \coloneqq \{ (x_1, \dots, x_{L}) \in [0,L]^L : x_{l_j-N_j} + \dots + x_{l_j + N_j} \geq 2N_j+1 {\rm \ for \ each \ } j \in [J] \}.
	\]
	Notice that $(f(1), \dots, f(L)) \in \calS$. Consequently, $\calS$ is a nonempty, compact, convex polyhedron contained in $\RR^L$. We introduce a linear operator $\Lambda \colon \RR^L \to \RR$ defined by
	\[
	\Lambda(x_1, \dots, x_{L}) \coloneqq x_1 + \dots + x_{L}.
	\]
	Then there exists a vertex of $\calS$, say $(x^*_1, \dots, x^*_{L})$, such that
	\[
	\Lambda(x^*_1, \dots, x^*_{L}) = \min \{ \Lambda(x_1, \dots, x_{L}) : (x_1, \dots, x_{L}) \in \calS\} \leq \Lambda(f(1), \dots, f(L)).
	\]
	We observe that $(x^*_1, \dots, x^*_{L})$, as a vertex of $\calS$, is the only solution of the linear system consisting of all those of the equations $x_1 = 0, \dots, x_{L}=0$, $x_1=L, \dots, x_{L}=L$, and 
	$x_{l_1-N_1} + \dots + x_{l_1 + N_1} = 2N_1+1, \dots, x_{l_J-N_J} + \dots + x_{l_J + N_J} = 2N_J+1$, 
%	and $x_n = N$ for $n \in [N]$ and $x_{n_j-N_j} + \dots + x_{n_j + N_j} = 2N_j+1$ for $j \in [1,J]$, 
	which are satisfied with $(x^*_1, \dots, x^*_{L})$ in place of $(x_1, \dots, x_{L})$. Applying a standard argument from the theory of linear systems, we obtain that $x^*_l \in \QQ$ for each $l \in [L]$.
	
	Consider $f^* \colon [L] \to [0, \infty)$ defined by $f^*(l) \coloneqq x^*_l$. Then we have $\| f^* \|_1 \leq \|f\|_1$ and $E^{\rm c}_1(f^*) \supseteq E^{\rm c}_1(f)$. Consequently, $|E_{f^*}| = \CCC(\XX_{[L]}, 1) \| f^* \|_1$, which implies $ \CCC(\XX_{[L]}, 1) \in \QQ$. By using Proposition~\ref{P1} and the fact that $ \CCC(\XX_{\ZZ}, 1) = \frac{11+\sqrt{61}}{12} \notin \QQ$ (see \cite{Mel} and \cite[Theorem~1]{KMPW}), we conclude that $ \CCC(\XX_{[L]}, 1) < \CCC(\XX_{\ZZ}, 1)$.
\end{proof}

\begin{lemma}[Case~\ref{F1A}, $p = 1$, uncentered operator] \label{L7}
	Assume that \ref{F1A} from Fact~\ref{F1} holds. Then we have $\CNC(\XX, 1) < \CNC(\XX_1, 1)$.
\end{lemma}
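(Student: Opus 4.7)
\emph{Proof proposal.} By reducing to the cyclic shift system $\XX_{[L]}$ as in the proof of Lemma~\ref{L2}, it suffices to show that $\CNC(\XX_{[L]}, 1) < 2$ for each $L \in \NN$. As in Lemma~\ref{L6}, compactness of the unit simplex in $\RR^L$ and continuity of $f \mapsto \|\TNC f\|_{1,\infty}$ yield an extremizer $f_0 \geq 0$ with $\|f_0\|_1 = 1$, together with $\lambda^* > 0$ and $J_0 \in [1, L]$ satisfying $|E^{\rm u}_{\lambda^*}(f_0)| = J_0$ and $\lambda^* J_0 = \CNC(\XX_{[L]}, 1)$. I argue by contradiction, assuming this constant equals $2$.

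The key reduction is via one-sided maximal operators. A direct computation shows that any window $[-\underline{r}, \overline{r}]$ of length $W = qL + s > L$ visits each cyclic position at least $q$ times, so its average does not exceed $(qS + f_0(A))/(qL + s) \leq 2\|f_0\|_1/L = 2/L$ (here $S = \|f_0\|_1$ and $A$ denotes the extra $s$-arc). Hence if $\lambda^* > 2/L$, every witness window is a proper cyclic arc $I_l \ni l$ with $f_0(I_l)/|I_l| \geq \lambda^*$. Splitting each such $I_l = [a_l, b_l]$ into $[a_l, l]$ and $[l+1, b_l]$, at least one sub-arc must have average $\geq \lambda^*$, yielding the inclusion
\[
E^{\rm u}_{\lambda^*}(f_0) \;\subseteq\; E^{-}_{\lambda^*}(f_0) \cup \bigl(E^{+}_{\lambda^*}(f_0) - 1\bigr),
\]
where $E^{\pm}_{\lambda^*}(f_0)$ denote the level sets for the forward and backward one-sided maximal operators. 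By Lemma~\ref{L1} applied to each one-sided operator on $\XX_{[L]}$, one has $\lambda^* |E^{\pm}_{\lambda^*}| \leq \|f_0\|_1 = 1$, so $\lambda^* J_0 \leq 2$.

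To exclude equality, I would distinguish two cases. If $\lambda^* > 2/L$, the assumption $\lambda^* J_0 = 2$ forces $|E^{\pm}_{\lambda^*}| = 1/\lambda^*$ (both one-sided estimates tight) and the inclusion above to be a disjoint union. A careful inspection of the tight case of the proof of Lemma~\ref{L1}, adapted cyclically, shows that both one-sided operators must then achieve their sharp bounds via corresponding ``disjoint-tail'' covers of $\mathrm{supp}(f_0)$, and these two covers impose incompatible structural constraints on the finite cycle, producing the contradiction. If instead $\lambda^* \leq 2/L$, then $J_0 = L$ and $\lambda^* = 2/L$, so $\TNC f_0 \geq 2/L$ pointwise, with each witness a proper arc $I_l$ of length $|I_l| \leq L/2$ and mass $f_0(I_l) \geq 2|I_l|/L$. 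A cyclic Vitali extraction produces disjoint arcs $\{I_{l_j}\}$ whose doublings cover $[L]$; the chain $1 = \|f_0\|_1 \geq \sum_j f_0(I_{l_j}) \geq (2/L) \sum_j |I_{l_j}| \geq 1$ is forced to be equality throughout, constraining $f_0$ so rigidly that some $l^\ast \in [L]$ must violate $\TNC f_0(l^\ast) \geq 2/L$---again a contradiction. The main obstacle lies in verifying these two equality-case impossibilities, each requiring a careful combinatorial analysis of the corresponding cyclic covering argument.
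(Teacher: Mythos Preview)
Your reduction to $\XX_{[L]}$ and the bound $\lambda^* J_0 \leq 2$ via the inclusion
\[
E^{\rm u}_{\lambda^*}(f_0)\;\subseteq\;E^{-}_{\lambda^*}(f_0)\cup\bigl(E^{+}_{\lambda^*}(f_0)-1\bigr)
\]
together with Lemma~\ref{L1} are fine. The genuine gap is precisely where you locate it yourself: neither ``equality-case impossibility'' is actually proved. The phrases ``impose incompatible structural constraints on the finite cycle'' and ``constraining $f_0$ so rigidly that some $l^\ast$ must violate $\TNC f_0(l^\ast)\geq 2/L$'' are not arguments, and it is far from clear that the tight case of the cyclic rising-sun lemma, or the cyclic Vitali chain you describe, can be pushed to a contradiction without a substantial amount of extra combinatorics. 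An extremizer approach forces you to analyse a boundary configuration, which is exactly the hard part you have not done.

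The paper bypasses the extremizer analysis entirely by exploiting an \emph{overlap} rather than a disjoint-union bound. Working with the unshifted one-sided level sets $E_{\rm left}$ and $E_{\rm right}$ (backward and forward), one observes that whenever $\ENC(f)\neq\emptyset$ there is a point $l_0$ with $f(l_0)\geq\lambda$, and this $l_0$ lies in $E_{\rm left}\cap E_{\rm right}$ via the singleton window. Since $E_{\rm left},E_{\rm right}\subseteq[L]$ with $|E_{\rm left}|,|E_{\rm right}|\leq\|f\|_1/\lambda$ and $E_{\rm left}\cap E_{\rm right}\neq\emptyset$, the elementary combinatorial bound
\[
\frac{|E\cup E'|}{\max\{|E|,|E'|\}}\;\leq\;\frac{2\lceil L/2\rceil-1}{\lceil L/2\rceil}\;<\;2
\qquad\text{for intersecting }E,E'\subseteq[L]
\]
gives $\lambda\,|\ENC(f)|\leq\frac{2\lceil L/2\rceil-1}{\lceil L/2\rceil}\,\|f\|_1$ for \emph{every} $f$ and $\lambda$, with no compactness or case analysis needed. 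Your shifted decomposition $E^{-}\cup(E^{+}-1)$ is what hides this overlap: the translate destroys the common point $l_0$, and that is why you are forced into the harder route.
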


\begin{proof}
	It suffices to show that for each fixed $L \in \NN$ we have $\CNC(\XX_{[L]}, 1) < \CNC(\XX_1, 1)$, where $\XX_{[L]}$ is as in Lemma~\ref{L2}. We also recall the well-known fact that $\CNC(\XX_1, 1) = 2$.
	
	Take $f \colon [L] \to [0, \infty)$ and $\lambda \in (0,\infty)$ such that $\ENC(f)$ is nonempty. Again, we identify $f$ with its $L$-periodic extension. Then $\ENC(f) = E_{\rm left} \cup E_{\rm right}$, where  
	\begin{align*}
	E_{\rm left} & \coloneqq \{ l \in [L] : f(l-\underline r) + \dots + f(l) \geq (n+1) \lambda {\rm \ for \ some \ } \underline r \in \NN \cup \{0\} \}, \\
	E_{\rm right} & \coloneqq \{ l \in [L] : f(l) + \dots + f(l+\overline r) \geq (n+1) \lambda {\rm \ for \ some \ } \overline r \in \NN \cup \{0\} \}.
	\end{align*}
	Applying the result for the one-sided maximal operator we see that
	\[
	|E_{\rm left}| \leq \|f\|_1 / \lambda \quad {\rm and} \quad |E_{\rm right}| \leq \|f\|_1 / \lambda.
	\]
	Since $E_{\rm left} \cap E_{\rm right} \neq \emptyset$, we deduce that, under the conditions specified above, the following estimate using the ceiling function must be satisfied
	\[
	\lambda \, |\ENC(f)| \leq 
	\max_{E, E' \subseteq [L] : E \cap E' \neq \emptyset}  \frac{|E \cup E'|}{\max \{ |E|, |E'|\} } \|f\|_1 
	 = \frac{2 \lceil \frac{L}{2} \rceil -1}{\lceil \frac{L}{2} \rceil} \|f\|_1.   
	\]
	 Consequently, $\CNC(\XX_{[L]}, 1) \leq \frac{2 \lceil \frac{L}{2} \rceil -1}{\lceil \frac{L}{2} \rceil} < 2$, as desired. In fact, $\CNC(\XX_{[L]}, 1) = \frac{2 \lceil \frac{L}{2} \rceil -1}{\lceil \frac{L}{2} \rceil}$, which can be proven by taking $\tilde{f} \coloneqq \ind{\lceil \frac{L}{2}\rceil }$ and $\tilde{\lambda} \coloneqq \lceil \frac{L}{2}\rceil^{-1}$, and observing that $E^{\rm u}_{\tilde{\lambda}}(\tilde{f}) = [2 \lceil \frac{L}{2}\rceil - 1]$. 
\end{proof}

\end{document}